\newtheorem{Proposition}{Proposition}
\numberwithin{Theorem}{section}
\numberwithin{Definition}{section}
\numberwithin{Lemma}{section}
\numberwithin{Algorithm}{section}
\numberwithin{equation}{section}
\newtheorem{theorem}{Theorem}[section]
\newtheorem{lemma}[theorem]{Lemma} 
\newtheorem{assumption}{Assumption}
\newtheorem{remark}{Remark}
\title{An efficient active-set method with applications to sparse approximations and risk minimization}
\author[1]{Spyridon Pougkakiotis}
\author[2]{Jacek Gondzio}
\author[3]{Dionysis Kalogerias}
\affil[1]{School of Science and Engineering, University of Dundee, UK}
\affil[2]{School of Mathematics, University of Edinburgh, UK}
\affil[3]{Department of Electrical Engineering, Yale University, USA}
\def\@cline#1-#2\@nil{%
  \omit
  \@multicnt#1%
  \advance\@multispan\m@ne
  \ifnum\@multicnt=\@ne\@firstofone{&\omit}\fi
  \@multicnt#2%
  \advance\@multicnt-#1%
  \advance\@multispan\@ne
  \leaders\hrule\@height\arrayrulewidth\hfill
  \cr
  \noalign{\nobreak\vskip-\arrayrulewidth}}
\begin{document}
%	\title{An efficient active-set method with applications to sparse approximations and risk minimization}
%\author{Spyridon Pougkakiotis \and Jacek Gondzio \and Dionysis Kalogerias}

\maketitle

			%\begin{changemargin}{0.8cm}{0.8cm} 
\begin{abstract}
\par In this paper we present an efficient active-set method for the solution of convex quadratic programming problems with general piecewise-linear terms in the objective, with applications to sparse approximations and risk-minimization. The algorithm is derived by combining a proximal method of multipliers (PMM) with a standard semismooth Newton method (SSN), and is shown to be globally convergent under minimal assumptions. Further local linear (and potentially superlinear) convergence is shown under standard additional conditions. The major computational bottleneck of the proposed approach arises from the solution of the associated SSN linear systems. These are solved using a Krylov-subspace method, accelerated by certain novel general-purpose preconditioners which are shown to be optimal with respect to the proximal penalty parameters. The preconditioners are easy to store and invert, since they  exploit the structure of the nonsmooth terms appearing in the problem's objective to significantly reduce their memory requirements. We showcase the efficiency, robustness, and scalability of the proposed solver on a variety of problems arising in risk-averse portfolio selection, $L^1$-regularized partial differential equation constrained optimization, quantile regression, and binary classification via linear support vector machines. We provide computational evidence, on real-world datasets, to demonstrate the ability of the solver to efficiently and competitively handle a diverse set of medium- and large-scale optimization instances.
\end{abstract}

\section{Introduction}
\subsection{Problem formulation}
\par In this paper we consider convex optimization problems of the following form:
\begin{equation}\label{primal problem} \tag{P}
\begin{split}
\underset{x \in \mathbb{R}^n}{\text{min}}&\quad \left\{c^\top x + \frac{1}{2}x^\top Q x + \sum_{i = 1}^l \left( \left(C x + d\right)_{i}\right)_+ + \delta_{\mathcal{K}}(x)\right\}, \\ 
\textnormal{s.t.} &\quad \ A x = b,
\end{split}
\end{equation}
\noindent where $c \in \mathbb{R}^n$, $Q \in \mathbb{R}^{n \times n}$ is a positive semi-definite matrix, $C \in \mathbb{R}^{l\times n}$, $d \in \mathbb{R}^l$, $A \in \mathbb{R}^{m\times n}$ is a linear constraint matrix with $b\in \mathbb{R}^m$ a given right-hand side, and $\mathcal{K}$ is a closed convex set $\mathcal{K} \triangleq \{x \in \mathbb{R}^n \vert\ x \in \left[a_l,a_u\right]\}$, with $a_l,\ a_u \in \mathbb{R}^n$, such that $(a_l)_i \in \mathbb{R}\cup\{-\infty\},\ (a_u)_i \in \mathbb{R}\cup \{+\infty\}$, for all $i = 1,\ldots,n$. Additionally, $(\cdot)_+ \equiv \max\{\cdot,0\}$, while $\delta_{\mathcal{K}}(\cdot)$ is an indicator function for the set $\mathcal{K}$, that is, $\delta_{\mathcal{K}}(x) = 0$ if $x \in \mathcal{K}$, and $\delta_{\mathcal{K}}(x) = \infty$ otherwise.
\par For simplicity, we introduce some additional notation and equivalently write \eqref{primal problem} as
\begin{equation*} 
\begin{split}
\underset{x \in \mathbb{R}^n}{\text{min}} & \left\{f(x) +  h(Cx+d)  + \delta_{\mathcal{K}}(x)\right\},\\
\textnormal{s.t.} &\  Ax = b.
\end{split}
\end{equation*}
\noindent where we define $f(x) \triangleq c^\top x + \frac{1}{2}x^\top Q x$, and $h(w) \triangleq \sum_{i=1}^{l} (w_i)_+$.
\begin{remark} \label{remark: l1 norm reformulation}
\par Let us notice that the model in \eqref{primal problem} can readily accommodate terms of the form of $\|Cx + d\|_1$ where $C \in \mathbb{R}^{l\times n}$, and $d \in \mathbb{R}^l$. Indeed, letting $c = c - C^\top \mathds{1}_l$ and adding the constant term $-\mathds{1}_l^\top d$ in the objective of \eqref{primal problem}, we notice that
\[\|Cx + d\|_1 \equiv -\mathds{1}_l^\top (Cx + d) + \sum_{i=1}^l \left(2(Cx + d)_i\right)_+, \]
where $\mathds{1}_l \triangleq (1,\ldots,1)^\top \in \mathbb{R}^l$. Similarly, any piecewise-linear term of the form 
\[\sum_{i=1}^l\max\left\{\left(C_1 x+ d_1\right)_i, \left(C_2 x+ d_2\right)_i\right\},\]
\noindent where $C_1,\ C_2 \in \mathbb{R}^{l\times n}$ and $d_1,\ d_2 \in \mathbb{R}^l$, can also be readily modeled. Specifically, setting $c = c + C_2^\top\mathds{1}_l$ and adding the constant term $d_2^\top \mathds{1}_l$ in the objective yields
\[ \mathds{1}_l^\top \left(C_2 x + d_2\right) + \sum_{i=1}^l \left(\left(C_1 x + d_1 - C_2x - d_2\right)_i \right)_+ \equiv \sum_{i = 1}^l \max\left\{\left(C_1 x + d_1\right)_i,\left(C_2 x + d_2\right)_i \right\}.\]
\noindent Finally, it is important to note that model \eqref{primal problem} allows for multiple piecewise-linear terms of the form $\max\{Cx + d,0_l\}$, $\|Cx + d\|_1$ or $\max\left\{C_1 x+ d_1, C_2 x+ d_2\right\}$, since we can always adjust $l$ to account for more than one terms. Hence, one can observe that \eqref{primal problem} is very general and can be used to model a plethora of very important problems that arise in practice. 
\end{remark}
\par Let $F \triangleq [A^\top\ -C^\top]^\top$ and $\hat{b} \triangleq [b^\top\ d^\top]^\top$. Using Fenchel duality, we can verify (see Appendix \ref{Appendix: derivation of dual}) that the dual of \eqref{primal problem} is
\begin{equation} \label{dual problem} \tag{D}
\begin{split}
\underset{x \in \mathbb{R}^n, y \in \mathbb{R}^{m+l}, z \in \mathbb{R}^n}{\text{max}} &\  \hat{b}^\top y - \frac{1}{2} x^\top Q x - \delta^*_{\mathcal{K}}(z) - h^*(y_{m+1:m+l}),\\
\textnormal{s.t.}\qquad \ \ &\ c + Qx -F^\top y + z = 0_n.
\end{split}
\end{equation}
\noindent Throughout the paper we make use of the following blanket assumption.
\begin{assumption} \label{assumption: solution of the QP}
Problems \eqref{primal problem} and \eqref{dual problem} are both feasible and $A$ has full row rank.
\end{assumption}
\noindent If $C = 0_{0,n}$, using \cite[Proposition 2.3.4]{BertsekasNedicOzdaglar} we note that Assumption \ref{assumption: solution of the QP} implies that there exists a primal-dual triple $(x^*,y^*,z^*)$ solving \eqref{primal problem}--\eqref{dual problem}. If the primal-dual pair \eqref{primal problem}--\eqref{dual problem} is feasible, it must remain feasible for any $C \in \mathbb{R}^{l\times n}$, since in this case \eqref{primal problem} can be written as a convex quadratic problem by appending appropriate (necessarily feasible) linear inequality constraints. Thus, Assumption \ref{assumption: solution of the QP} suffices to guarantee that the solution set of \eqref{primal problem}--\eqref{dual problem} is non-empty.
\par  In light of the discussion in Remark \ref{remark: l1 norm reformulation}, we observe that \textbf{problem \eqref{primal problem} models a plethora of very important problems arising in several application domains} spanning, among others, operational research, machine learning, data science, and engineering. More specifically, \eqref{primal problem} can model linear and convex quadratic programming instances, regularized (fused) lasso instances (often arising in signal or image processing and machine learning, e.g., see \cite{SIREV:Chenetal,SIREV:DeSimone_etal,bookVapnik,JRSS:Zou}), or portfolio allocation problems \cite{JBanFin:AlexColYi} arising in finance. Furthermore, various optimal control problems can be tackled in the form of \eqref{primal problem}, such as those arising from $L^1$-regularized partial differential equation (PDE) constrained optimization, assuming that a \emph{discretize-then-optimize} strategy is adopted (e.g., see \cite{ESAIM:GerdDaniel}). Additionally, various risk-minimization problems with linear random cost functions can be modeled by \eqref{primal problem} (e.g., see \cite{MathFin:LiNg,JRisk:RockUry,Prod:Silva_etal}). Nonetheless, even risk-minimization problems with nonlinear random cost functions, which are typically solved via Gauss-Newton schemes (e.g., see \cite{MathProg:Burke}), often require the solution of sub-problems of the form of \eqref{primal problem}. Finally, continuous relaxations of integer programming problems with applications in operational research (e.g., \cite{MathProg:Laz}) often take the form of \eqref{primal problem}. Given the multitude of problems requiring easy access to (usually accurate) solutions of \eqref{primal problem}, the derivation of efficient, robust, and scalable solution methods is of paramount importance and has naturally attracted a lot of attention in the literature. 
\subsection{Existing solution methods}
\par There is a large variety of first-order methods capable of finding an approximate solution to \eqref{primal problem}. For example, one could employ proximal (sub-)gradient (e.g., see \cite{SIAM:Beck}) or splitting schemes (e.g., see \cite{SciComp:DengYin}). While such solution methods are very general, easy to implement, and require very little memory, they are usually able to find only highly approximate solutions, not exceeding 2- or 3-digits of accuracy. If a more accurate solution is needed, then one has to resort to an approach that utilizes second-order information. 
\par There are three major classes of second-order methods for problems of the form of \eqref{primal problem}. Those include globalized (smooth, semismooth, quasi or proximal) Newton methods (e.g., see \cite{MathOR:Han,CAM:MartQi,COAP:Stella_etal}), variants of the proximal point method (e.g., see \cite{COAP:Marchi,IEEE_CDC:Dhingra_etal,MathProgComp:Hermans_etal,SIAMOPT:Leeetal,SIAMOpt:Lietal}), or interior point methods (IPMs) applied to a reformulation of \eqref{primal problem} (e.g., see \cite{SIREV:DeSimone_etal,IPMs:FountoulakisEtAl2013,arXiv:GondPougkPears,NLAA:PearsonPorcStoll}). \par Most globalized Newton-like approaches or proximal point variants studied in the literature are developed for composite programming problems in which either $C = 0_{0,n}$ (e.g., see \cite{NLAA:ChenQi,COAP:Marchi,COAP:GillRobi,MathProg:Itoetal,SIAMOpt:Lietal}) or $\mathcal{K} = \mathbb{R}^n$ and the general $\max\{\cdot,0\}$ terms are substituted by separable $\ell_1$ terms (e.g., see \cite{IEEE_CDC:Dhingra_etal,InverseProbs:HansRaasch,SIAMOpt:Lietal2}). Nonetheless, there have been developed certain globalized Newton-like schemes, specialized to the case of $L^1$-regularized PDE-constrained optimization (see \cite{OptEng:MannelRund,CAA:Porcellietal}), in which both $\ell_1$ term as well as box constraints are explicitly handled. We should notice, however, that globalized Newton-like schemes applied to \eqref{primal problem} require additional assumptions (on top of Assumption \ref{assumption: solution of the QP}) to guarantee convergence or stability of the related Newton linear systems arising as sub-problems. Typically, superlinear convergence of Newton-like schemes is observed ``close to a solution". Under additional assumptions, global convergence can be achieved via appropriate line-search or trust-region strategies (e.g., see the developments in \cite{SIAMOpt:Christofetal,MathOR:Han,MathProg:Itoetal,SIAMOpt:Themelis_etal} and the references therein). 
\par The potential convergence and stability issues arising within Newton-like schemes can be alleviated by combining Newton-like methods with proximal point variants. Solvers based on the proximal point method can achieve superlinear convergence, assuming their penalty parameters increase indefinitely at a suitable rate (e.g., see \cite{MathOpRes:Rock,SIAMJCO:Rock}). 
For problems \eqref{primal problem}--\eqref{dual problem}, the sub-problems arising within proximal methods are (possibly nonsmooth) convex optimization instances, and are typically solved by means of semismooth Newton strategies. The associated SSN linear systems are better conditioned than their (possibly regularized) interior-point counterparts (e.g., see \cite{NLAA:BergGondMartPearPoug,SIREV:DeSimone_etal,IPMs:FountoulakisEtAl2013,arXiv:GondPougkPears,IPMs:WaltzMoralNocedOrban}); however, convergence is expected to be slower, as the method does not enjoy the polynomial worst-case complexity of interior-point methods. Nevertheless, these better conditioned linear systems can, in certain cases, allow one to achieve better computational and/or memory efficiency, especially if the nonsmooth terms are appropriately handled. 
\par Various such solvers have been developed and analyzed in the literature. For example, the authors in \cite{SIAMOpt:ZhaoSunToh} developed a dual augmented Lagrangian scheme combined with an SSN method for the solution of semidefinite programming problems and obtained very promising results. This scheme was then utilized for the solution of linear programming problems in \cite{SIAMOpt:Lietal}, and for lasso-regularized problems in \cite{SIAMOpt:Lietal2}. A similar dual augmented Lagrangian-based solver (ALM) was recently developed for square-root Lasso problems in \cite{WangTangSciComp}. Nonetheless, those aforementioned dual ALM solvers are brittle, in that slight alterations in the structure of the problem can significantly hinder their employability and efficiency. For example, a computationally attractive extension of \cite{SIAMOpt:Lietal} to convex quadratic (or, generally, non-separable) programming is by no means obvious and requires a different algorithmic strategy than the dualization approach followed by the authors. Furthermore, there is a \textbf{wide gap in the current literature of proximal-SSN methods concerning the efficient solution of the associated SSN linear systems}. While the convergence results appearing in the literature enable the use of inexact linear system solves (e.g., enabling the use of iterative linear algebra, such as Krylov subspace solvers), no general preconditioning strategies have been proposed or analyzed and the question of exploiting efficient linear algebra in this context is still open.
\subsection{Contributions}
\par In this paper, we develop an active-set method for \eqref{primal problem}--\eqref{dual problem} by employing an appropriate proximal method of multipliers (PMM) using a standard backtraking-linesearch semismooth Newton (SSN) strategy for solving the associated PMM sub-problems. The SSN linear systems are approximately solved by means of Krylov subspace methods, using certain novel general-purpose preconditioners. Unlike most proximal point methods given in the literature (e.g., see the primal approaches in \cite{SIAMOPT:Leeetal,IEEE_DC:Patrinos_etal}, the dual approaches in \cite{SIAMOpt:Lietal2,SIAMOpt:Lietal} or the primal-dual approaches in \cite{COAP:Marchi,COAP:GillRobi,MathOpRes:Rock}), the proposed method introduces  proximal terms for each primal and dual variable of the problem, and this results in Newton linear systems which are easy to precondition and solve. We dualize each of the two nonsmooth terms of the objective in \eqref{primal problem}, which contributes to the simplification of the resulting SSN linear systems, and paves the way for readily analyzing the convergence of the proposed algorithm. Our contributions are summarized as follows:
\begin{itemize}
    \item[$\bullet$] We are able to directly apply the theory developed in \cite{SIAMOpt:Lietal} to show that the proposed outer proximal method of multipliers is globally convergent under mere feasibility assumptions, and locally superlinearly convergent under standard additional assumptions. Importantly, we should mention that the theory in \cite{SIAMOpt:Lietal} was developed to analyze the convergence of a proximal semismooth Newton scheme designed for linear programming. Nonetheless, in this work we showcase that the proposed active-set scheme can be analyzed using this theory, and without requiring any additional assumptions other than those employed in \cite{SIAMOpt:Lietal} in the context of linear programming, despite our model \eqref{primal problem} being significantly more general. We also note the inherent difference of our approach compared to the dual augmented Lagrangian schemes developed in \cite{SIAMOpt:Lietal2,SIAMOpt:Lietal,WangTangSciComp,SIAMOpt:ZhaoSunToh}, which enables the efficient solution of problems with non-separable (smooth or nonsmooth) terms in the objective function of \eqref{primal problem}.
    \par Additionally, by introducing an appropriate augmented Lagrangian penalty function (see Section \ref{sec: PAL penalties}), we obtain continuously differentiable PMM sub-problems. As a result, we are able to apply a standard globally convergent (at a local superlinear rate) backtracking-linesearch semismooth Newton scheme (e.g., see \cite{SIAMOpt:ZhaoSunToh}) for their solution. Thus, our active-set scheme is readily theoretically supported, and covers a very wide range of problems not currently addressed by similar methods in the literature, unless \eqref{primal problem} is reformulated without exploiting its inherent structure.
    \item[$\bullet$] The price to pay for the strong theoretical properties of the proposed approach is that the difficulty arising from the nonsmooth terms is hidden in the solution of the associated SSN linear systems. Nonetheless, we identify a principled way of exploiting the resulting active-set structure, that occurs by utilizing the Clarke subdifferential within SSN, to create novel, robust, and highly efficient preconditioners for the solution of these linear systems by means of Krylov subspace solvers. Specifically, we derive and analyze highly effective preconditioning strategies which we show to be optimal with respect to the penalty parameters of the PMM; that is, the conditioning of the preconditioned linear systems does not deteriorate as we increase the penalty parameters of the PMM (which, in turn, accelerates the convergence of the PMM). Additionally, the preconditioners are versatile and general (i.e., not problem-dependent), and are very manageable in terms of memory requirements, giving a strong computational edge of the proposed approach compared to alternative second-order methods. The developed linear algebra kernel of our method showcases the advantages of utilizing active-set schemes compared to alternative (state-of-the-art) interior point methods for similar problems; such a study was missing from the field of augmented Lagrangian methods, and we argue that it should be capitalized more heavily in similar augmented Lagrangian-type approaches appearing in the literature. To the best of our knowledge, aside from the work in \cite{CAA:Porcellietal}, which is specialized to the case of $L^1$-regularized PDE constrained optimization, most proximal Newton-like schemes utilizing Krylov subspace methods do so without employing any (non-trivial) preconditioner (e.g., see \cite{NLAA:ChenQi,SIAMOpt:Lietal2,IEEE_DC:Patrinos_etal,WangTangSciComp}). 
    \item[$\bullet$] We provide extensive computational evidence to support that the proposed approach exhibits a significantly better practical convergence than that predicted by the theory, over a very wide range of applications. Specifically, we showcase the robustness and the efficiency of our approach over a plethora of instances arising from risk-averse portfolio selection problems, $L^1$-regularized partial differential equation constrained optimization, quantile regression as well as linear classification via support vector machines. In each of these cases, we compare our approach to a robust regularized interior point method (see \cite{arXiv:GondPougkPears,COAP:PougkGond}) which is available on GitHub, and showcase its efficiency not only in terms of CPU time and memory requirements, but also in terms of total iterations required to obtain a sufficiently accurate solution. Indeed, we demonstrate that while our approach requires more iterations compared to the alternative polynomially convergent interior point method, the discrepancy in terms of iteration count is not particularly significant. This is reflected in the computational efficiency of our approach, in terms of CPU time. At the same time, we demonstrate that both second-order solvers are significantly more robust and efficient compared to a state-of-the-art alternating direction method of multipliers \cite{osqp} (ADMM), since we are interested in finding sufficiently accurate solutions in our numerical experiments, which prevent first-order schemes from being competitive.
       \item[$\bullet$]The method is extremely versatile and can solve a plethora of very important applications (as we showcase in Section \ref{sec: applications}), by explicitly exploiting the structure of the nonsmooth terms appearing in the objective of \eqref{primal problem}. Additionally, it deals with general box constraints, without requiring the introduction of auxiliary variables to deal with upper and lower bounds separately, something that is required when utilizing conic-based solvers. As a result, the associated linear systems solved within SSN have significantly smaller dimensions, compared to linear systems arising within interior point methods suitable for the solution of problems like \eqref{primal problem} (e.g., see \cite{SIREV:DeSimone_etal,arXiv:GondPougkPears,NLAA:PearsonPorcStoll}), potentially making the proposed approach a more attractive alternative for very large-scale instances.
\end{itemize}
\par Before closing this section, we should mention that the algorithm developed in this paper arose in an attempt to consolidate two of our previously developed methodologies laid out in two technical reports (\cite{arxiv:Pougk_etal1,arxiv:Pougk_etal2}); the former report studies \eqref{primal problem} by assuming that the nonsmooth terms are separable, while the latter studies \eqref{primal problem} in its full generality. However, these previous approaches dealt with certain non-smooth terms of \eqref{primal problem} explicitly. As a result, the associated PMM sub-problems of the developed algorithms were nonsmooth, and that resulted in limited theoretical support of the inner SSN scheme utilized to tackle these sub-problems. This issue was overcome in this consolidated work, by transferring the difficulty to the SSN linear systems, and then utilizing our versatile iterative linear algebra kernel (and associated preconditioning strategy) to ensure that this new and improved version of the active-set schemes developed in \cite{arxiv:Pougk_etal1,arxiv:Pougk_etal2} is equally scalable but also more robust and better theoretically supported.
\par To summarize, in Section \ref{sec: PAL penalties} we derive a proximal method of multipliers and briefly discuss its convergence properties. Then, in Section \ref{sec: SSN method} we present a well-studied globally convergent backtracking-linesearch semismooth Newton scheme used to approximately solve the PMM sub-problems, noticing its local superlinear convergence properties under no additional assumptions. Next, in Section \ref{sec: solution of linear systems}, we propose \emph{novel} general-purpose preconditioners for the associated SSN linear systems and analyze their effectiveness. In Section \ref{sec: applications}, our approach is extensively tested on a wide variety of optimization instances. Finally, we derive some conclusions in Section \ref{sec: Conclusions}.

\paragraph{Notation.} Given a vector $x$ in $\mathbb{R}^n$, $\|x\|$ denotes the Euclidean norm. Letting $R \succ 0_{n,n}$ be a symmetric positive definite matrix, we denote $\|x\|_R^2 = x^\top R x$. Given a closed set $\mathcal{K} \subset \mathbb{R}^n$, we write $\Pi_{\mathcal{K}}(x) \triangleq \arg\min\{\|x-z\|\ \vert\ z \in \mathcal{K}\}$, while for any $R \succ 0_{n,n}$ we write $\textnormal{dist}_R(z,\mathcal{K}) \triangleq \inf_{z'\in \mathcal{K}}\|z-z'\|_R$ (with $\textnormal{dist}_I(z,\mathcal{K}) \equiv \textnormal{dist}(z,\mathcal{K})$). Given a proper and closed convex function $p \colon \mathbb{R}^n \rightarrow \mathbb{R}\cup\{+\infty\}$, we define $\textbf{prox}_p (u) \triangleq \arg\min_x \left\{ p(x) + \frac{1}{2}\|u-x\|^2\right\}$. Given an arbitrary rectangular matrix $A$, $\sigma_{\max}(A)$ denotes its maximum singular value. For an arbitrary square matrix $B$, $\lambda(B)$ is the set of eigenvalues of $B$ while $\lambda_{\max}(B)$ (resp., $\lambda_{\min}(B)$) denotes its maximum (resp., minimum) eigenvalue. Given an index set  $\mathcal{D}$, $|\mathcal{D}|$ denotes its cardinality. Given a rectangular matrix $A \in \mathbb{R}^{m \times n}$ and an index set $\mathcal{B} \subseteq \{1,\ldots,n\}$, we denote the columns of $A$, the indices of which belong to $\mathcal{B}$, as $A_{\mathcal{B}}$. Given a square matrix $Q \in \mathbb{R}^{n\times n}$, we denote the subset of columns and rows of $Q$, the indices of which belong to $\mathcal{B}$, as $\left(Q\right)_{\mathcal{B},\mathcal{B}}$. We denote by $\textnormal{Diag}(Q)$ the diagonal matrix with diagonal elements equal to those of $Q$. Given a matrix $A \in \mathbb{R}^{m\times n}$, and two index sets $\mathcal{B}_1 \subseteq \{1,\ldots,m\}$ and $\mathcal{B}_2 \subseteq \{1,\ldots,n\}$, we denote the subset of rows and columns of $A$, the indices of which belong to $\mathcal{B}_1,\ \mathcal{B}_2$ respectively, as $\left(A\right)_{\mathcal{B}_1,\mathcal{B}_2}$. Finally, given an arbitrary vector $d$ with $n$ components as well as some indices $1\leq i_1 \leq i_2 \leq n$, we denote by $d_{i_1:i_2}$ the vector $(d_{i_1},d_{i_1+1},\ldots,d_{i_2})$. To avoid confusion, indices $i$ are always used to denote entries of vectors or matrices, while $k$ and $j$ are reserved to denote iteration counters (outer (PMM) and inner (SSN), respectively).
\section{A primal-dual proximal method of multipliers} \label{sec: PAL penalties}
\par In what follows, we derive the proximal augmented Lagrangian penalty function corresponding to the primal problem \eqref{primal problem}. Using the latter, we derive a primal-dual PMM for solving the pair \eqref{primal problem}--\eqref{dual problem}. The convergence of this PMM scheme is subsequently analyzed, assuming that we are able to find sufficiently accurate solutions to its associated sub-problems. In the next section, we briefly present a standard semismooth Newton scheme for the solution of these sub-problems, with global and local superlinear convergence guarantees. 
\subsection{Derivation of the PMM} \label{subsec: derivation of the PMM}
\par We begin by deriving the Lagrangian associated to \eqref{primal problem}. First, we define the function \[\varphi(x) \triangleq f(x)  + h(Cx+d) + \delta_{\mathcal{K}}(x) + \delta_{\{0_m\}}\left(b-Ax\right).\]
\noindent Following the dualization strategy proposed in \cite[Chapter 11]{Springer:RockWets}, we let
\[\hat{\varphi}(x,u',w',v') \triangleq f(x) + h(Cx+d+w') + \delta_{\mathcal{K}}(x+v') + \delta_{\{0_m\}}\left(b-Ax + u'\right),\]
\noindent for which it holds that $\varphi(x) =\hat{\varphi}(x,0_m,0_l,0_n)$. Then, by letting $y = [y_1^\top\ y_2^\top]^\top$, the Lagrangian associated to \eqref{primal problem} reads:
\begin{equation*}
\begin{split}
\ell (x,y,z) &\triangleq \ \inf_{u',w',v'} \left\{\hat{\varphi}(x,u',w',v') - y_1^\top  u' -y_2^\top w' - z^\top  v'\right\} \\ &=\ f(x)  - \sup_{u'} \left\{y_1^\top  u' -\delta_{\{0_m\}}\left(b-Ax + u'\right) \right\}  \\&\qquad -\sup_{w'}\left\{y_2^\top w' -h\left(Cx+d+w'\right)\right\} - \sup_{v'} \left\{z^\top v' - \delta_{\mathcal{K}}(x+v') \right\}\\
&=\ f(x)  -y_1^\top(Ax-b) +y_2^\top(Cx+d) -h^*(y_2) +   z^\top  x - \delta^*_{\mathcal{K}}(z),
\end{split}
\end{equation*}
\noindent where we used the definition of the Fenchel conjugate. Before we proceed with the derivation of the augmented Lagrangian associated to $\ell(\cdot)$, let us observe that the Fenchel conjugate of $h(\cdot)$ is $h^*(u) = \delta_{\mathcal{C}}(u)$, where $\mathcal{C} \triangleq \left\{u \in \mathbb{R}^l \big\vert v_i \in [0,1],\ i \in \{1,\ldots,l\}\right\}.$ Given a penalty parameter $\beta > 0$, the augmented Lagrangian corresponding to \eqref{primal problem} reads:
\begin{equation} \label{final augmented lagrangian of the primal}
\begin{split}
\mathcal{L}_{\beta}(x; y, z) &\triangleq\  f(x)  - y_1^\top (Ax-b) + \frac{\beta}{2}\|Ax-b\|^2 \\
&\qquad +(Cx + d)^\top \Pi_{\mathcal{C}}\left(y_2 + \beta(Cx+d)\right) - \frac{1}{2\beta}\left\|y_2-\Pi_{\mathcal{C}}\left(y_2 + \beta(Cx+d)\right)\right\|^2\\
&\qquad -\frac{1}{2\beta}\|z\|^2 + \frac{1}{2\beta}\left\|z+\beta x -\beta \Pi_{\mathcal{K}}\left(\beta^{-1}z+ x\right)\right\|^2.
\end{split}
\end{equation}
\noindent A detailed derivation of the augmented Lagrangian penalty function given in \eqref{final augmented lagrangian of the primal} can be found in Appendix \ref{Appendix: derivation of the augmented Lagrangian}.
\par Assume that at iteration $k\geq 0 $ we have the estimates $(x_k,y_k,z_k)$ as well as the penalty parameters $\beta_k,\ \rho_k$, such that $\rho_k \triangleq \frac{\beta_k}{\tau_k}$, where $\{\tau_k\}_{k = 0}^{\infty}$ is a non-increasing positive sequence, i.e. $\tau_k > 0$ for all $k \geq 0$. We define the continuously differentiable function 
\begin{equation} \label{eqn: proximal Lagrangian penalty}
\phi(x) \equiv\  \phi_{\rho_k,\beta_k}(x;x_k,y_k,z_k) \triangleq  \mathcal{L}_{\beta_k}(x;y_k,z_k) + \frac{1}{2\rho_k}\|x-x_k\|^2.
\end{equation}
\noindent Using the previous notation, we need to find $x^*$ such that
\[\left(\nabla \phi(x^*)\right)^\top (x - x^*) \geq 0,\qquad \forall\ x \in \mathbb{R}^n,\]
\noindent where it can be shown (see Appendix \ref{Appendix: differentiability of augmented Lagrangian}) that
\begin{equation*}
\begin{split}
\nabla \phi(x) &= \nabla f(x) - A^\top y_{1_k} + \beta_kA^\top(Ax-b) + C^\top \Pi_{\mathcal{C}}\left(y_{2_k} + \beta_k\left( Cx+d\right) \right)\\
&\qquad  + (z_k + \beta_k x) - \beta_k \Pi_{\mathcal{K}}(\beta_k^{-1}z_k + x) + \rho_k^{-1}(x-x_k).
\end{split}
\end{equation*}
\noindent We now describe the primal-dual PMM in Algorithm \ref{primal-dual PMM algorithm}.
\renewcommand{\thealgorithm}{PD-PMM}

\begin{algorithm}[!ht]
\caption{Primal-dual proximal method of mutlipliers}
    \label{primal-dual PMM algorithm}
    \textbf{Input:}  $(x_0,y_0,z_0) \in \mathbb{R}^n \times \mathbb{R}^{m+l} \times \mathbb{R}^n$, $\beta_0,\ \beta_{\infty},\ \tau_{\infty} > 0$, $\{\tau_k\}_{k=0}^{\infty}$ such that $\tau_k \searrow \tau_{\infty} > 0$.
\begin{algorithmic}
\State Choose a sequence of positive numbers $\{\epsilon_k\}$ such that $\epsilon_k \rightarrow 0$. 
\For {($k = 0,1,2,\ldots$)}
\State Find $x_{k+1}$ such that:
\begin{equation} \label{primal-dual PMM main sub-problem}
\left\|\nabla \phi(x_{k+1})\right\|  \leq \epsilon_k.
\end{equation}
   \begin{flalign}  \label{primal-dual PMM y-update}
\ \ \ \ \ y_{k+1} &= \  \begin{bmatrix}
    y_{1_k} - \beta_k (A x_{k+1} - b)\\
    \Pi_{\mathcal{C}}\left(y_{2_k} + \beta_k (C x_{k+1}+ d) \right)
\end{bmatrix}.&&
\end{flalign} 
   \begin{flalign}  \label{primal-dual PMM z-update}
\ \ \ \ \ z_{k+1} &= \  (z_k + \beta_k x_{k+1}) - \beta_k\Pi_{\mathcal{K}}\big(\beta_k^{-1}z_k + x_{k+1} \big).&&
\end{flalign} 
\begin{flalign} \ \ \ \ \ \beta_{k+1} &\nearrow \beta_{\infty} \leq \infty, \quad \rho_{k+1} = \frac{\beta_{k+1}}{\tau_{k+1}}.&&
\end{flalign}
\EndFor
\State \Return $(x_k,y_k,z_k)$.
\end{algorithmic}
\end{algorithm}
\par Notice that we allow step \eqref{primal-dual PMM main sub-problem} to be computed inexactly. In Section \ref{subsection: PMM convergence analysis} we will provide precise conditions on the error sequence guaranteeing that Algorithm \ref{primal-dual PMM algorithm} achieves global convergence, and additional conditions for achieving a local linear or superlinear rate (where the local superlinear convergence requires that $\beta_k \rightarrow \infty$). Further conditions on the starting point and on the starting penalty parameter $\beta_0$, required to guarantee a global linear convergence rate, are also discussed.  

\subsection{Convergence analysis} \label{subsection: PMM convergence analysis}
\par In this section we provide conditions on the error sequence $\{\epsilon_k\}$ in \eqref{primal-dual PMM main sub-problem} that guarantee the convergence of Algorithm \ref{primal-dual PMM algorithm}, potentially at a global linear or local superlinear rate. The analysis is derived by applying the results shown in \cite[Section 2]{SIAMOpt:Lietal} or \cite{RockafellarGeneralizedPMM} (or by an extension of the analyses in \cite{MathOpRes:Rock,SIAMJCO:Rock}) after connecting Algorithm \ref{primal-dual PMM algorithm} to an appropriate proximal point iteration. As before, we let $F = [A^\top\ -C^\top]^\top$ and $\hat{b} = [b^\top\ d^\top]^\top$ and define the maximal monotone operator $T_{\ell} \colon \mathbb{R}^{2n+m+l} \rightrightarrows \mathbb{R}^{2n+m+l}$, associated to \eqref{primal problem}--\eqref{dual problem}:
\begin{equation*} \label{maximal monotone operator associated to primal-dual problem}
\begin{split}
T_{\ell}(x,y,z) &\triangleq \Bigg\{(u',v',w')\ \bigg\vert\  u' \in  \nabla f(x) - F^\top y + z,\  v' \in Fx -\hat{b} + \begin{bmatrix} 0_m\\\partial h^*(y_2) \end{bmatrix},\ w'+ x \in \partial\delta^*_{\mathcal{K}}(z)\Bigg\}.
\end{split}
\end{equation*}
\noindent  The inverse of this operator reads
\begin{equation*} \label{inverse of maximal monotone operator associated to primal-dual problem}
\begin{split}
T^{-1}_{\ell}(u',v',w') &\triangleq \arg\max_{y,z}\min_x\left\{\ell(x,y,z) + u'^\top x - v'^\top y -w'^\top z\right\}.
\end{split}
\end{equation*}
\noindent Notice that Assumption \ref{assumption: solution of the QP} implies that $T^{-1}_{\ell}(0) \neq \emptyset$. Following the result in \cite{SIAMOpt:Lietal}, and since $f$ is convex quadratic while $h$ is piecewise linear, we note that $T_{\ell}$ is in fact a polyhedral multifunction (see \cite{RobinsonMathProgStud} for a detailed discussion on the properties of such multifunctions). In light of this property of $T_{\ell}$ we note that the following specialized \emph{metric subregularity} condition (see \cite{Springer:DonRock} for a definition) holds automatically without any additional assumptions.
\begin{lemma}\label{lemma: error condition for polyhedral multifucntion}
For any $r > 0$, there exists $\kappa > 0$ such that
\begin{equation} \label{eqn: error condition for polyhedral multifunction}
\textnormal{dist}\big(p,T_{\ell}^{-1}(0)\big) \leq \kappa\ \textnormal{dist}\big(0,T_{\ell}(p)\big),\quad \forall\ p \in \mathbb{R}^{2n+m},\ \textnormal{with }\textnormal{dist}\big(p,T_{\ell}^{-1}(0)\big) \leq r.
\end{equation}
\end{lemma}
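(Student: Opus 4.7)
The plan is to verify that $T_{\ell}$ is a polyhedral multifunction and then invoke the classical theorem of Robinson \cite{RobinsonMathProgStud} on the upper Lipschitz continuity of such multifunctions, from which \eqref{eqn: error condition for polyhedral multifunction} follows as a direct consequence. No hypothesis beyond Assumption~\ref{assumption: solution of the QP} is needed.

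First I would argue that $\textnormal{gph}(T_{\ell})$ is a finite union of polyhedral convex sets in $\mathbb{R}^{2(2n+m+l)}$. The affine component $\nabla f(x) - F^\top y + z$ causes no trouble because $f$ is convex quadratic, so $\nabla f$ is affine. For the nonsmooth parts, $h^*(\cdot) = \delta_{\mathcal{C}}(\cdot)$ on the box $\mathcal{C} = [0,1]^l$, and hence $\partial h^*$ coincides with the normal-cone mapping $N_{\mathcal{C}}$ of a polyhedral box; its graph is the well-known finite union, over faces of $\mathcal{C}$, of Cartesian products of the form \emph{(face)}$\,\times\,$\emph{(normal cone of the face)}. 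The same reasoning applies to $\partial \delta^*_{\mathcal{K}}$, which (up to coordinate swap in the graph) is the normal-cone mapping of the polyhedral box $\mathcal{K}$. Graph-polyhedrality is preserved under sums with affine maps, Cartesian products, and inversion; consequently both $T_{\ell}$ and $T_{\ell}^{-1}$ are polyhedral multifunctions.

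With polyhedrality established, the bound follows immediately from Robinson's theorem applied to the polyhedral multifunction $T_{\ell}^{-1}$ at the reference point $\bar{q} = 0 \in \mathbb{R}^{2n+m+l}$, which lies in $\textnormal{dom}(T_{\ell}^{-1})$ by Assumption~\ref{assumption: solution of the QP}. The theorem yields a constant $\kappa = \kappa(r) > 0$ such that
\begin{equation*}
T_{\ell}^{-1}(q) \;\subseteq\; T_{\ell}^{-1}(0) + \kappa \|q\|\, \{v \in \mathbb{R}^{2n+m+l}: \|v\|\le 1\}
\end{equation*}
for every $q$ in the relevant neighborhood of $0$. Rewriting this in terms of $T_{\ell}$ via the equivalence $p \in T_{\ell}^{-1}(q) \Leftrightarrow q \in T_{\ell}(p)$ and minimizing over $q \in T_{\ell}(p)$ gives $\textnormal{dist}(p, T_{\ell}^{-1}(0)) \leq \kappa \cdot \textnormal{dist}(0, T_{\ell}(p))$. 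The restriction $\textnormal{dist}(p, T_{\ell}^{-1}(0)) \leq r$ merely fixes the bounded region around $T_{\ell}^{-1}(0)$ over which a single constant $\kappa$ is valid.

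The main obstacle is purely the bookkeeping required to certify the polyhedrality of $T_{\ell}$: one has to check that $\partial h^*$ and $\partial \delta^*_{\mathcal{K}}$ have polyhedral graphs and that none of the operations assembling $T_{\ell}$---sums with affine maps, Cartesian products, inversion---destroys this structure. Once this is in place, no genuinely new estimate is needed and the lemma is a direct application of the cited result.
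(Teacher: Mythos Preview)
Your proposal is correct and follows essentially the same approach as the paper: the paper's proof simply refers the reader to \cite[Lemma 2.4]{SIAMOpt:Lietal} and \cite{RobinsonMathProgStud}, having noted just before the lemma that $T_{\ell}$ is a polyhedral multifunction because $f$ is convex quadratic and $h$ is piecewise linear. Your write-up is more explicit about why the graph of $T_{\ell}$ is polyhedral, but the underlying argument is identical.
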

\begin{proof}
\noindent The reader is referred to \cite[Lemma 2.4]{SIAMOpt:Lietal} as well as \cite{RobinsonMathProgStud}.
\end{proof}
\par Next, let some sequence of positive definite matrices $\{R_k\}_{k=0}^{\infty}$ be given with $R_k \triangleq \tau_k I_n \oplus I_{m+l} \oplus I_n$, for all $k \geq 0$, where $\tau_k$ is defined in Algorithm \ref{primal-dual PMM algorithm} and $\oplus$ denotes the direct sum of two matrices. We define the single-valued proximal operator $P_k \colon \mathbb{R}^{2n + m + l} \mapsto \mathbb{R}^{2n+m + l}$, associated with $T_{\ell}$:
\begin{equation} \label{proximal operator}
P_k \triangleq \big(R_k + \beta_k T_{\ell}\big)^{-1} R_k.
\end{equation}
\noindent In particular, under our assumptions on the matrices $R_k$, we have that (e.g. see \cite{SIAMJCO:Rock}) for all $(u_1,v_1,w_1),\ (u_2,v_2,w_2) \in \mathbb{R}^{2n+m+l}$, the following inequality (non-expansiveness) holds
\begin{equation} \label{non-expansiveness of P_k}
\left\|(u_1,v_1,w_1)-P_k(u_2,v_2,w_2)\right\|_{R_k} \leq \left\|(u_1,v_1,w_1)-(u_2,v_2,w_2)\right\|_{R_k}. \end{equation}
\noindent Obviously, we can observe that if $(x^*,y^*,z^*) \in T_{\ell}^{-1}(0)$, then $P_k(x^*,y^*,z^*) = (x^*,y^*,z^*)$. We are now able to connect Algorithm \ref{primal-dual PMM algorithm} with the proximal point iteration produced by \eqref{proximal operator}.
\begin{Proposition} \label{proposition: connection to PMM}
Let $\{(x_k,y_k,z_k)\}_{k=0}^{\infty}$ be a sequence of iterates produced by Algorithm \textnormal{\ref{primal-dual PMM algorithm}}. Then, for every $k \geq 0$ we have that
\begin{equation} \label{connection of PMM with PPA error}
\left\|(x_{k+1},y_{k+1},z_{k+1})-P_k(x_k,y_k,z_k)\right\|_{R_k} \leq \frac{\beta_k}{\min\{\sqrt{\tau_k},1\}}\left\|\nabla \phi(x_{k+1})\right\|.
\end{equation}
\end{Proposition}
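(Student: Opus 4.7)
The plan is to show that the PMM updates realize an exact proximal step of $T_\ell$ whose prox-center differs from $(x_k,y_k,z_k)$ only in the $x$-block, by a multiple of the residual $r_k:=\nabla\phi(x_{k+1})$, and then invoke the non-expansiveness \eqref{non-expansiveness of P_k}. By the definition of $P_k$, the vector $(\bar x,\bar y,\bar z):=P_k(x_k,y_k,z_k)$ is the unique solution of $R_k\bigl((x_k,y_k,z_k)-(\bar x,\bar y,\bar z)\bigr)\in\beta_k T_\ell(\bar x,\bar y,\bar z)$, which I would unpack blockwise using $R_k=\tau_k I_n\oplus I_{m+l}\oplus I_n$ into (i) an $x$-equation $\rho_k^{-1}(x_k-\bar x)=\nabla f(\bar x)-F^\top\bar y+\bar z$, (ii) a $y$-inclusion $\beta_k^{-1}(y_k-\bar y)\in F\bar x-\hat b+(0_m;\partial h^*(\bar y_2))$, and (iii) a $z$-inclusion $\beta_k^{-1}(z_k-\bar z)\in-\bar x+\partial\delta_\mathcal{K}^*(\bar z)$. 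The target is to show that $(x_{k+1},y_{k+1},z_{k+1})$ satisfies (ii) and (iii) exactly, while satisfying (i) after replacing $x_k$ by $\tilde x_k:=x_k+\rho_k r_k$.

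The crux is that the updates \eqref{primal-dual PMM y-update}--\eqref{primal-dual PMM z-update} use plain Euclidean projections with no $\beta_k$ scaling, yet must realize inclusions into $\beta_k\partial h^*$ and $\beta_k\partial\delta_\mathcal{K}^*$. This works precisely because $h^*=\delta_\mathcal{C}$ and $\delta_\mathcal{K}^*$ are support functions, so their subdifferentials are normal cones and are invariant under positive rescaling. Applying the projection identity $u-\Pi_\mathcal{C}(u)\in N_\mathcal{C}(\Pi_\mathcal{C}(u))=\partial h^*(\Pi_\mathcal{C}(u))$ to $u=y_{2_k}+\beta_k(Cx_{k+1}+d)$ and dividing by $\beta_k$ (a no-op inside the cone) gives (ii) at $(x_{k+1},y_{k+1},z_{k+1})$; the $y_1$-block follows by direct rearrangement of \eqref{primal-dual PMM y-update}; and a Moreau-type rewrite $z_{k+1}=\beta_k(u'-\Pi_\mathcal{K}(u'))$ with $u'=\beta_k^{-1}z_k+x_{k+1}$, combined with the same conical argument, yields (iii). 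Substituting these identifications into the closed-form expression for $\nabla\phi$ given immediately before Algorithm~\ref{primal-dual PMM algorithm} collapses it to $r_k=\nabla f(x_{k+1})-F^\top y_{k+1}+z_{k+1}+\rho_k^{-1}(x_{k+1}-x_k)$, which rearranges to $\rho_k^{-1}(\tilde x_k-x_{k+1})=\nabla f(x_{k+1})-F^\top y_{k+1}+z_{k+1}$, i.e., condition (i) at $(x_{k+1},y_{k+1},z_{k+1})$ with $x_k$ shifted to $\tilde x_k$. Combined with (ii) and (iii), this gives the exact identity $(x_{k+1},y_{k+1},z_{k+1})=P_k(\tilde x_k,y_k,z_k)$.

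Applying \eqref{non-expansiveness of P_k} then yields
\[\bigl\|(x_{k+1},y_{k+1},z_{k+1})-P_k(x_k,y_k,z_k)\bigr\|_{R_k}=\bigl\|P_k(\tilde x_k,y_k,z_k)-P_k(x_k,y_k,z_k)\bigr\|_{R_k}\leq\bigl\|(\rho_k r_k,0,0)\bigr\|_{R_k}=\sqrt{\tau_k}\,\rho_k\,\|r_k\|=\frac{\beta_k}{\sqrt{\tau_k}}\|r_k\|,\]
and the trivial bound $\sqrt{\tau_k}\geq\min\{\sqrt{\tau_k},1\}$ upgrades the denominator to match \eqref{connection of PMM with PPA error}. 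The main obstacle I anticipate is being meticulous in the conical-scaling step: one must carefully invoke the positive homogeneity of $\partial h^*$ and $\partial\delta_\mathcal{K}^*$ so that the $\beta_k$ factors inside the updates can be absorbed without leaving the relevant subdifferentials. This cone-invariance is precisely what reconciles the unscaled projections in Algorithm~\ref{primal-dual PMM algorithm} with the $\beta_k$-scaled inclusion defining $P_k$; once it is in place, the remainder of the argument is essentially bookkeeping.
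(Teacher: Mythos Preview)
Your proposal is correct and follows essentially the same route as the paper: both establish the inclusion $(\nabla\phi(x_{k+1}),0,0)+\beta_k^{-1}R_k\bigl((x_k,y_k,z_k)-(x_{k+1},y_{k+1},z_{k+1})\bigr)\in T_\ell(x_{k+1},y_{k+1},z_{k+1})$, which is equivalent to your identity $(x_{k+1},y_{k+1},z_{k+1})=P_k(\tilde x_k,y_k,z_k)$, and then apply non-expansiveness of $P_k$ to obtain the bound. The only difference is that the paper obtains this inclusion by citing \cite[Proposition~7]{MathOpRes:Rock}, whereas you verify it directly via the normal-cone/projection identities; your direct verification is a perfectly valid (and arguably more transparent) substitute for that citation.
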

\begin{proof}
\noindent From \eqref{eqn: proximal Lagrangian penalty}, we observe that 
$$\nabla \phi(x_{k+1}) = \nabla \mathcal{L}_{\beta_k}(x_{k+1};y_k,z_k) + \frac{1}{2\rho_k}(x_{k+1}-x_k).$$
\noindent Thus, by utilizing \cite[Proposition 7]{MathOpRes:Rock}, we deduce
\begin{equation*}\label{connection of PPM with PPA eqn}
\left(\nabla \phi(x_{k+1}),0_{m+l},0_n\right)  +
\beta_k^{-1}\left( \tau_k(x_k - x_{k+1}),y_k - y_{k+1},z_k - z_{k+1}\right) \in\ T_{\ell}(x_{k+1},y_{k+1},z_{k+1}).
\end{equation*}
\noindent Subtracting $P_k(x_k,y_k,z_k)$ from both sides, taking norms, and using the non-expansiveness of $P_k$ (see \eqref{non-expansiveness of P_k}), yields \eqref{connection of PMM with PPA error} and concludes the proof.
\end{proof}
\par Now that we have established the connection of Algorithm \ref{primal-dual PMM algorithm} with the proximal point iteration governed by the operator $P_k$ defined in \eqref{proximal operator}, we can directly provide conditions on the error sequence in \eqref{primal-dual PMM main sub-problem}, to guarantee global (possibly linear) and local linear (potentially superlinear) convergence of Algorithm \ref{primal-dual PMM algorithm}. To that end, we will make use of certain results, as reported in \cite[Section 2]{SIAMOpt:Lietal}. Firstly, we provide the global convergence result for the algorithm.
\begin{theorem}
Let Assumption \textnormal{\ref{assumption: solution of the QP}} hold. Let $\{(x_k,y_k,z_k)\}_{k=0}^{\infty}$ be generated by Algorithm \textnormal{\ref{primal-dual PMM algorithm}}. Furthermore, assume that we choose a sequence $\{\epsilon_k\}_{k=0}^{\infty}$ in \eqref{primal-dual PMM main sub-problem}, such that
\begin{equation} \label{eqn: error condition for global convergence}
\epsilon_k \leq \frac{\min\{\sqrt{\tau_k},1\}}{\beta_k}\delta_k,\quad 0 \leq \delta_k,\quad \sum_{k=0}^{\infty} \delta_k < \infty.
\end{equation}
\noindent Then, $\{(x_k,y_k,z_k)\}_{k=0}^{\infty}$ is bounded and converges to a primal-dual solution of \eqref{primal problem}--\eqref{dual problem}.
\end{theorem}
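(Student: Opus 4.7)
The plan is to reduce the statement to the inexact proximal point framework of \cite{MathOpRes:Rock,SIAMOpt:Lietal}, using Proposition \ref{proposition: connection to PMM} as the bridge between Algorithm \ref{primal-dual PMM algorithm} and the proximal point iteration governed by the operator $P_k$ in \eqref{proximal operator}. The first step is to combine the inner stopping criterion \eqref{primal-dual PMM main sub-problem}, the upper bound on $\epsilon_k$ in \eqref{eqn: error condition for global convergence}, and the estimate \eqref{connection of PMM with PPA error} of Proposition \ref{proposition: connection to PMM}, so as to obtain
\[
\|(x_{k+1},y_{k+1},z_{k+1}) - P_k(x_k,y_k,z_k)\|_{R_k} \;\leq\; \delta_k, \qquad \sum_{k=0}^{\infty}\delta_k < \infty.
\]
Thus Algorithm \ref{primal-dual PMM algorithm} is an inexact proximal point method for the maximal monotone operator $T_\ell$, with errors (measured in the varying inner products $\|\cdot\|_{R_k}$) that are absolutely summable.

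The second step is to fix any $(x^*,y^*,z^*) \in T_\ell^{-1}(0)$, which is nonempty by Assumption \ref{assumption: solution of the QP}. Since $P_k(x^*,y^*,z^*) = (x^*,y^*,z^*)$, the non-expansiveness property \eqref{non-expansiveness of P_k} together with the triangle inequality yields
\[
\|(x_{k+1},y_{k+1},z_{k+1}) - (x^*,y^*,z^*)\|_{R_k} \;\leq\; \|(x_k,y_k,z_k) - (x^*,y^*,z^*)\|_{R_k} + \delta_k.
\]
Because $\tau_k \searrow \tau_\infty > 0$, the matrices $R_k$ are monotonically non-increasing in the Loewner order and satisfy the uniform bounds $\min\{\tau_\infty,1\}\, I \preceq R_k \preceq \max\{\tau_0,1\}\, I$. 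Consequently $\|\cdot\|_{R_{k+1}} \leq \|\cdot\|_{R_k}$, and setting $a_k \triangleq \|(x_k,y_k,z_k) - (x^*,y^*,z^*)\|_{R_k}$ gives the quasi-Fej\'er recursion $a_{k+1} \leq a_k + \delta_k$. Summability of $\{\delta_k\}$ then implies that $\{a_k\}$ converges, and the uniform norm equivalence transfers boundedness to the Euclidean sense, establishing the first half of the claim.

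The third step upgrades boundedness to full convergence by an Opial-type argument, exactly as in \cite[Theorem 2.1]{SIAMOpt:Lietal}. Boundedness yields a cluster point $(\bar{x},\bar{y},\bar{z})$; passing to the limit in the inclusion produced in the proof of Proposition \ref{proposition: connection to PMM}, and using the closedness of the graph of the maximal monotone operator $T_\ell$ together with the error bound above, one shows $0 \in T_\ell(\bar{x},\bar{y},\bar{z})$. Since the quasi-Fej\'er inequality holds with respect to any element of $T_\ell^{-1}(0)$, it holds in particular for $(\bar{x},\bar{y},\bar{z})$, which then forces the whole sequence (and not merely the extracted subsequence) to converge to $(\bar{x},\bar{y},\bar{z})$.

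The only step requiring any real care is the handling of the varying weights $R_k$; once the monotonicity $R_{k+1} \preceq R_k$ and the uniform two-sided bound (both consequences of $\tau_k \searrow \tau_\infty > 0$) have been recorded, the analysis reduces cleanly to the standard inexact proximal point template of Rockafellar and its refinement by Li, Sun, and Toh, and the proof amounts to verifying the hypotheses of that template rather than re-deriving it.
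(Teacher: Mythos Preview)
Your proposal is correct and takes essentially the same approach as the paper: the paper's proof is simply ``omitted since it is a direct application of \cite[Theorem 2.3]{SIAMOpt:Lietal}'', and your argument does precisely this reduction, spelling out the intermediate steps (the combination of Proposition \ref{proposition: connection to PMM} with \eqref{eqn: error condition for global convergence} to obtain summable $R_k$-errors, the quasi-Fej\'er recursion via non-expansiveness, and the Opial-type cluster-point argument) that the cited theorem encapsulates. The only discrepancy is cosmetic: you point to Theorem~2.1 of \cite{SIAMOpt:Lietal} whereas the paper invokes Theorem~2.3, so you may wish to double-check the precise numbering in that reference.
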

\begin{proof}
\noindent The proof is omitted since it is a direct application of \cite[Theorem 2.3]{SIAMOpt:Lietal}. 
\end{proof}
\par Next, we discuss local linear (and potentially superlinear) convergence of Algorithm \ref{primal-dual PMM algorithm}. To that end, let $r > \sum_{k = 0}^{\infty} \delta_k$, where $\delta_k$ is defined in \eqref{eqn: error condition for global convergence}. Then, from Lemma \ref{lemma: error condition for polyhedral multifucntion} we know that there exists $\kappa > 0$ associated with $r$ such that
\begin{equation} \label{eqn: specific kappa and r for error condition}
 \textnormal{dist}\big((x,y,z),T_{\ell}^{-1}(0)\big) \leq \kappa\ \textnormal{dist}\big(0,T_{\ell}(x,y,z)\big),
 \end{equation}
\noindent for all $(x,y,z) \in \mathbb{R}^{2n+m}$ such that $\textnormal{dist}\big((x,y,z),T_{\ell}^{-1}(0)\big) \leq r$. 
\begin{theorem} \label{thm:local convergence} Let Assumption \textnormal{\ref{assumption: solution of the QP}} hold, and assume that $(x_0,y_0,z_0)$ is chosen to satisfy \[\textnormal{dist}_{R_0}\big((x_0,y_0,z_0),T_{\ell}^{-1}(0)\big) \leq r - \sum_{k = 0}^{\infty} \delta_k,\]
\noindent where $\{\delta_k\}_{k=0}^{\infty}$ is given in \eqref{eqn: error condition for global convergence}. Let also $\kappa$ be given as in \eqref{eqn: specific kappa and r for error condition} and assume that we choose a sequence $\{\epsilon_k\}_{k=0}^{\infty}$ in \eqref{primal-dual PMM main sub-problem} such that
\begin{equation} \label{eqn: error condition for superlinear convergence}
\epsilon_k \leq \frac{\min\{\sqrt{\tau_k},1\}}{\beta_k} \min\big\{\delta_k, \delta_k' \|(x_{k+1},y_{k+1},z_{k+1}) - (x_k,y_k,z_k)\|_{R_k}\big\},
\end{equation}
\noindent where $0 \leq \delta_k$, $\sum_{k=0}^{\infty} \delta_k < \infty$, and $0 \leq \delta_k' < 1$, $\sum_{k = 0}^{\infty} \delta_k' < \infty$. Then, for all $k \geq 0$ we have that
\begin{equation*} \label{eqn: superlinear convergence error decay}
\begin{split}
\textnormal{dist}_{R_{k+1}}\left((x_{k+1},y_{k+1},z_{k+1},T_{\ell}^{-1}(0)\right) & \leq \mu_k \textnormal{dist}_{R_k}\left( x_{k},y_{k},z_{k},T_{\ell}^{-1}(0)\right),
\end{split}
\end{equation*}
\[\mu_k \triangleq (1-\delta_k')^{-1}\left(\delta_k' + (1+\delta_k')\frac{\kappa \gamma_k}{\sqrt{\beta_k^2 + \kappa^2\gamma_k^2}}\right), \qquad \lim_{k\rightarrow \infty} \mu_k = \mu_{\infty} \triangleq \frac{\kappa \gamma_{\infty}}{\sqrt{\beta_{\infty}^2 + \kappa^2\gamma_{\infty}^2}},\]
\noindent where $\gamma_k \triangleq \max\{\tau_k,1\}$ and $\gamma_{\infty} = \max\{\tau_{\infty},1\}$ (noting that $\mu_{\infty} = 0,\ \textnormal{if }\beta_{\infty} = \infty$).
\end{theorem}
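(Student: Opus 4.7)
The plan is to reduce the statement to the abstract local convergence theorem for inexact proximal point iterates associated with a polyhedral multifunction (as in \cite[Theorem 2.5]{SIAMOpt:Lietal}, which is the weighted-norm generalization of Rockafellar--Luque). The ingredients have already been put in place earlier in the excerpt: Proposition \ref{proposition: connection to PMM} identifies our PD-PMM update as an inexact evaluation of $P_k = (R_k + \beta_k T_\ell)^{-1} R_k$, and Lemma \ref{lemma: error condition for polyhedral multifucntion} supplies the metric subregularity of $T_\ell$ with modulus $\kappa$ on the ball of radius $r$. Hence the bulk of the argument is bookkeeping, and I would execute it in four steps.

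First, I would translate the assumed error bound \eqref{eqn: error condition for superlinear convergence} into a Rockafellar-type relative inexactness criterion in the $R_k$-norm. Combining \eqref{eqn: error condition for superlinear convergence} with Proposition \ref{proposition: connection to PMM} immediately yields
\[
\|(x_{k+1},y_{k+1},z_{k+1}) - P_k(x_k,y_k,z_k)\|_{R_k} \le \min\bigl\{\delta_k,\ \delta_k'\,\|(x_{k+1},y_{k+1},z_{k+1}) - (x_k,y_k,z_k)\|_{R_k}\bigr\}.
\]
In particular, the cheaper summable bound $\delta_k$ plus the non-expansiveness \eqref{non-expansiveness of P_k} allow me to prove by induction that $\textnormal{dist}_{R_k}\bigl((x_k,y_k,z_k),T_\ell^{-1}(0)\bigr) \le r$ for every $k\ge 0$, using the initial gap $\textnormal{dist}_{R_0}((x_0,y_0,z_0),T_\ell^{-1}(0)) \le r - \sum_k \delta_k$ supplied by the hypothesis. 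This keeps the iterates in the region on which the metric subregularity estimate \eqref{eqn: specific kappa and r for error condition} is valid.

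Second, I would establish the central contraction inequality for the exact proximal operator, namely
\[
\textnormal{dist}_{R_k}\bigl(P_k(x_k,y_k,z_k),T_\ell^{-1}(0)\bigr) \le \frac{\kappa\,\gamma_k}{\sqrt{\beta_k^2 + \kappa^2\gamma_k^2}}\ \textnormal{dist}_{R_k}\bigl((x_k,y_k,z_k),T_\ell^{-1}(0)\bigr).
\]
The derivation uses the defining inclusion $\beta_k^{-1} R_k(x_k - P_k(x_k,y_k,z_k),\,y_k - \cdot,\,z_k - \cdot) \in T_\ell(P_k(x_k,y_k,z_k))$, together with Lemma \ref{lemma: error condition for polyhedral multifucntion} applied at $P_k(x_k,y_k,z_k)$, to bound $\textnormal{dist}\bigl(P_k(x_k,y_k,z_k),T_\ell^{-1}(0)\bigr)$ by $(\kappa/\beta_k)\|R_k((x_k,y_k,z_k) - P_k(x_k,y_k,z_k))\|$. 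Switching between the Euclidean and $R_k$-weighted norms introduces the factor $\gamma_k = \max\{\tau_k,1\}$ (since the $x$-block of $R_k$ has weight $\tau_k$ while the other blocks have weight $1$). A Pythagorean decomposition in the $R_k$-inner product, obtained by projecting $(x_k,y_k,z_k)$ onto $T_\ell^{-1}(0)$ and using non-expansiveness of $P_k$ to separate the squared distances, produces the displayed $1/\sqrt{\beta_k^2 + \kappa^2\gamma_k^2}$ factor. This is exactly the step where the constant $\kappa\gamma_k/\sqrt{\beta_k^2+\kappa^2\gamma_k^2}$ in $\mu_k$ is born, and it is the most delicate computation.

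Third, I would combine the previous two estimates with the triangle inequality in the $R_{k+1}$-norm,
\[
\textnormal{dist}_{R_{k+1}}\!\bigl((x_{k+1},y_{k+1},z_{k+1}),T_\ell^{-1}(0)\bigr) \le \|(x_{k+1},y_{k+1},z_{k+1}) - P_k(x_k,y_k,z_k)\|_{R_{k+1}} + \textnormal{dist}_{R_{k+1}}\!\bigl(P_k(x_k,y_k,z_k),T_\ell^{-1}(0)\bigr),
\]
use the ordering $R_{k+1} \preceq R_k$ (immediate from $\tau_{k+1}\le\tau_k$) to replace $\|\cdot\|_{R_{k+1}}$ with $\|\cdot\|_{R_k}$, and absorb the relative error term $\delta_k'\|(x_{k+1},\cdot) - (x_k,\cdot)\|_{R_k}$ via another triangle inequality to isolate $\textnormal{dist}_{R_k}((x_k,y_k,z_k),T_\ell^{-1}(0))$ on the right-hand side. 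Solving for the new distance then yields the stated contraction with
\[
\mu_k = (1-\delta_k')^{-1}\!\left(\delta_k' + (1+\delta_k')\,\frac{\kappa\gamma_k}{\sqrt{\beta_k^2 + \kappa^2\gamma_k^2}}\right).
\]
Finally, taking $k\to\infty$ and using $\delta_k'\to 0$, $\beta_k\nearrow\beta_\infty$, $\gamma_k\to\gamma_\infty$ gives the asymptotic rate $\mu_\infty$, which collapses to $0$ when $\beta_\infty = \infty$, thereby recovering the superlinear regime. The hard part is purely the second step, where the precise constant $\kappa\gamma_k/\sqrt{\beta_k^2+\kappa^2\gamma_k^2}$ has to be obtained by carefully tracking the weighting $R_k$ through the metric-subregularity inequality; every other step is a reasonably direct adaptation of the classical inexact proximal point analysis.
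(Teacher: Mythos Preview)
Your proposal is correct and follows the same route as the paper: the paper's own proof is simply a direct citation of \cite[Theorem 2.5]{SIAMOpt:Lietal} (together with \cite[Theorem 2]{SIAMJCO:Rock}), and what you have written is precisely a sketch of how that cited result is established once Proposition \ref{proposition: connection to PMM} and Lemma \ref{lemma: error condition for polyhedral multifucntion} are in hand. In other words, you have unpacked the black-box citation rather than taken a different path.
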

\begin{proof}
\noindent The proof is omitted since it follows by direct application of \cite[Theorem 2.5]{SIAMOpt:Lietal} (see also \cite[Theorem 2]{SIAMJCO:Rock}).
\end{proof}
\begin{remark}
Following \textnormal{\cite[Remarks 2, 3]{SIAMOpt:Lietal}}, we can choose a non-increasing sequence $\{\delta_k'\}_{k=0}^{\infty}$ and a large enough $\beta_0$ such that $\mu_0 < 1$, which in turn implies that $\mu_k \leq \mu_0 < 1$, yielding a global linear convergence of both $\textnormal{dist}\big((x_{k},y_k,z_k),T_{\ell}^{-1}(0)\big)$ as well as $\textnormal{dist}_{R_k}\big((x_{k},y_k,z_k),T_{\ell}^{-1}(0)\big)$, assuming that the starting point of the algorithm satisfies the assumption stated in Theorem \textnormal{\ref{thm:local convergence}}. On the other hand, as is implicitly mentioned in Theorem \textnormal{\ref{thm:local convergence}}, if $\beta_k$ is forced to increase indefinitely, we obtain a local superlinear convergence rate (notice that $\mu_{\infty} = 0$ if $\beta_{\infty} = \infty$).
\end{remark}

\section{Semismooth Newton method} \label{sec: SSN method}
\par In this section we briefly present a standard semismooth Newton (SSN) scheme suitable for the solution of problem \eqref{primal-dual PMM main sub-problem}, appearing in Algorithm \ref{primal-dual PMM algorithm}. 
To that end, we fix the estimates $(x_k,y_k,z_k)$ as well as the penalty parameters $\beta_k,\ \rho_k$. We set $x_{k_0} = x_k$, and at every iteration $j$ of SSN, we approximately solve a system of the following form:
\begin{equation} \label{Primal-dual SSN system}
J_{k_j}
d_x= -\nabla \phi(x_{k_j}),
\end{equation}
\noindent where $J_{k_j} \in \partial^C_x\left(\nabla \phi(x_{k_j})\right)$. The symbol $\partial_x^C(\cdot)$ denotes the \emph{Clarke subdifferential} of a function (see \cite{JWS:Clarke}) with respect to $x$, which can be obtained as the convex hull of the \emph{Bouligand subdifferential} (\cite{JWS:Clarke}). Any element of the Clarke subdifferential is a \emph{Newton derivative} (see \cite[Chapter 13]{arXiv:ClasValk}), since $\nabla \phi(x_{k_j})$ is a \emph{piecewise continuously differentiable} and \emph{regular function}. Using \cite[Theorem 14.8]{arXiv:ClasValk}, we obtain that for any interval $[l,u]$ on the real line (i.e., $l < u$):
\begin{equation*}
\partial_{w}^C \left(\Pi_{[l,u]}(w_i)\right)= \begin{cases} 
\{1\}, &\qquad \textnormal{if}\quad w \in (l,u),\\
\{0\}, &\qquad \textnormal{if}\quad w \notin [l,u],\\
[0,1], &\qquad \textnormal{if}\quad w \in \{l,u\}.
\end{cases}
\end{equation*}
\par For computational reasons, we always choose matrices $M_{k_j}$ from the Bouligand subdifferential. This choice will become apparent in the Section \ref{sec: solution of linear systems}. On the other hand, it is well-known (see \cite[Theorem 4]{CAM:MartQi}) that an inexact semismooth Newton scheme using the Bouligand subdifferential converges at a local linear rate (assuming that the linear systems are solved up to an appropriate accuracy), if the equation $\nabla \phi(x) = 0_n$ is \emph{BD-regular} at the optimum $(x_k^*)$ (that is, each element of the Bouligand subdifferential of $\nabla\phi(x_k^*)$ is nonsingular). Note, however, that since we employ the semismooth Newton scheme to solve the sub-problems arising from Algorithm \ref{primal-dual PMM algorithm}, we obtain that the resulting nonsmooth equations are indeed BD-regular for every outer iteration $k \geq 0$. Thus, assuming that the associated linear systems are solved up to a sufficient accuracy (see \cite[Theorem 4]{CAM:MartQi}), we obtain local linear convergence rate of the resulting inexact semismooth Newton scheme. If, additionally, the solution accuracy of the associated linear systems is increased at a suitable rate, the resulting local rate can be superlinear. 
\par We complete the derivation of the SSN by introducing backtracking line-search to ensure global convergence. Then, under no additional assumptions, one can show that SSN is globally convergent.  Algorithm \ref{primal-dual SNM algorithm} outlines a semismooth Newton method for the solution of \eqref{primal-dual PMM main sub-problem}. We assume that the associated linear systems are approximately solved by means of a Krylov subspace method. An analysis of the effect of errors arising from the use of Krylov methods within SSN applied to nonsmooth equations can be found in \cite{NLAA:ChenQi}. 
\renewcommand{\thealgorithm}{SSN}
\begin{algorithm}[!ht]
\caption{Semismooth Newton method}
    \label{primal-dual SNM algorithm}
    \textbf{Input:} $\epsilon_k > 0$, $\mu \in \left(0,\frac{1}{2}\right)$, $\delta \in (0,1)$, $\gamma \in (0,1]$, $\bar{\eta} \in (0,1)$, $x_{k_0} = x_k$,  $y_{k_0} = y_k$.
\begin{algorithmic}
\For {($j = 0,1,2,\ldots$)}
\State Choose $J_{k_j} \in \partial^C_x\left(\nabla \phi(x_{k_j})\right)$, and solve
\[J_{k_j}
d_x= -\nabla \phi(x_{k_j}),\]
\State such that $\left\| J_{k_j} d_x + \nabla \phi(x_{k_j})\right\| \leq   \min\left\{\bar{\eta},\left\|\nabla\phi(x_{k_j})\right\|^{1+\gamma}\right\}.$ 
\State (Line-search) Set $\alpha_j = \delta^{m_j}$, where $m_j$ is the first non-negative integer for which:
\[\phi\left(x_{k_j} + \delta^{m_j}d_x\right) \leq  \phi(x_{k_j}) + \mu \delta^{m_j}\left(\nabla \phi(x_{k_j}) \right)^\top d_x\]
%\[\hat{\phi}\left(x_{k_j} + \delta^{m_j}d_x, y_{k_j} + \delta^{m_j}d_y\right)  \leq \hat{\phi}\left(x_{k_j},y_{k_j}\right)  + \mu \delta^{m_j}\left(\widehat{F}_{\beta_k,\rho_k,\zeta_k}\left(x_{k_j},y_{k_j}\right)\right)^\top d. \]
\State $x_{k_{j+1}} = x_{k_j} + \alpha_j d_x$.
\If {$\left(\left\|\nabla \phi(x_{k_{j+1}})\right\| \leq \epsilon_k\right)$}
\State \Return $x_{k_{j+1}}$.
\EndIf
\EndFor
\end{algorithmic}
\end{algorithm}
\par Below we provide a theorem characterizing the convergence of Algorithm \ref{primal-dual SNM algorithm}.
\begin{theorem} \label{thm: SSN convergence}
    Let $\{x_{k_j}\}_{j = 0}^{\infty}$ be the sequence of iterates generated by Algorithm \textnormal{\ref{primal-dual SNM algorithm}}. Then, this sequence converges to the unique optimal solution $x_k^*$ of $\min_{x} \phi(x)$, and $\|x_{k_{j+1}} - x_k^*\| = \mathcal{O}\left(\|x_{k_{j}} - x_k^*\|^{1+\gamma}\right)$. 
\end{theorem}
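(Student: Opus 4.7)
The strategy is to first note that $\phi$ is strongly convex with all Clarke Jacobians of $\nabla\phi$ uniformly positive definite, and then combine a standard descent-based global convergence argument with an inexact semismooth Newton local rate analysis. From the expression for $\nabla\phi$ derived in Section~\ref{subsec: derivation of the PMM}, the gradient is built from affine operations, compositions of Euclidean projections onto the boxes $\mathcal{K}$ and $\mathcal{C}$, and the proximal term $\rho_k^{-1}(x-x_k)$; hence $\nabla\phi$ is piecewise affine, globally Lipschitz, and strongly semismooth. A direct differentiation shows that every $J \in \partial^C_x \nabla\phi(x)$ has the symmetric form
\[
J \;=\; Q + \beta_k A^\top A + \beta_k C^\top D_{\mathcal{C}} C + \beta_k (I_n - D_{\mathcal{K}}) + \rho_k^{-1} I_n,
\]
where $D_{\mathcal{C}}, D_{\mathcal{K}}$ are diagonal with entries in $[0,1]$. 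Thus $J \succeq \rho_k^{-1} I_n$ uniformly in $x$, which gives both the strong convexity of $\phi$ (so $x_k^*$ is the unique minimizer) and BD-regularity of the equation $\nabla\phi=0_n$ at every point, in particular at $x_k^*$.

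For the global convergence statement, I would use the uniform bound $J_{k_j} \succeq \rho_k^{-1} I_n$ together with the inexact Newton residual tolerance to conclude that $d_x$ is a uniform descent direction for $\phi$, with $-\nabla\phi(x_{k_j})^\top d_x$ bounded below by a positive multiple of $\|\nabla\phi(x_{k_j})\|^2$ and $\|d_x\|$ bounded above by a constant multiple of $\|\nabla\phi(x_{k_j})\|$. Since $\nabla\phi$ is globally Lipschitz, a standard argument shows that the Armijo backtracking loop terminates in finitely many inner steps with a step size bounded away from zero whenever $\|\nabla\phi(x_{k_j})\|$ is bounded away from zero. Strong convexity renders the level set $\{x : \phi(x) \leq \phi(x_{k_0})\}$ compact, so the iterates are bounded, every accumulation point of $\{x_{k_j}\}$ is stationary for $\phi$, and by uniqueness the whole sequence converges to $x_k^*$.

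For the local rate, the task is to confirm that the line-search eventually accepts the unit step, after which the standard inexact semismooth Newton bound of \cite{CAM:MartQi} yields the claimed exponent. Strong semismoothness of $\nabla\phi$, combined with BD-regularity at $x_k^*$ and the residual tolerance $\|\nabla\phi(x_{k_j})\|^{1+\gamma}$, delivers the error recursion
\[
\|x_{k_j} + d_x - x_k^*\| \;=\; \mathcal{O}\!\left(\|x_{k_j} - x_k^*\|^{1+\gamma}\right).
\]
The main obstacle is verifying that once $x_{k_j}$ is sufficiently close to $x_k^*$ the Armijo test accepts $\alpha_j = 1$. To do this I would expand $\phi(x_{k_j} + d_x) - \phi(x_{k_j})$ via a mean-value inequality along the segment using Clarke Jacobians of $\nabla\phi$, control the semismoothness remainder by the residual bound above, and conclude that
\[
\phi(x_{k_j} + d_x) - \phi(x_{k_j}) \;\leq\; \tfrac{1}{2}\,\nabla\phi(x_{k_j})^\top d_x + o\!\left(\nabla\phi(x_{k_j})^\top d_x\right);
\]
since $\mu < \tfrac{1}{2}$, this forces $\alpha_j = 1$ for all sufficiently large $j$, after which the superlinear rate with exponent $1+\gamma$ is inherited directly from the inexact Newton recursion. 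The delicate point throughout is that the mean-value expansion must be carried out with Clarke Jacobians instead of a classical Hessian, which is precisely where the strong (rather than merely plain) semismoothness of the piecewise affine $\nabla\phi$ is essential.
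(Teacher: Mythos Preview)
Your proposal is correct and follows essentially the same approach as the paper, which simply defers to \cite[Proposition~3.3, Theorems~3.4--3.5]{SIAMOpt:ZhaoSunToh}: you have spelled out the standard Zhao--Sun--Toh argument (uniform positive definiteness $J \succeq \rho_k^{-1} I_n$ of every Clarke Jacobian, yielding strong convexity, a uniform descent direction, Armijo termination, and then eventual acceptance of the unit step combined with the inexact semismooth Newton bound from \cite{CAM:MartQi}). The only minor remark is that strong semismoothness of the piecewise-affine $\nabla\phi$ would in principle allow $\gamma = 1$, and it is the inexactness tolerance $\|\nabla\phi(x_{k_j})\|^{1+\gamma}$ that caps the exponent at $1+\gamma$---you state this correctly, but it is worth making explicit.
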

\begin{proof}
    The proof is omitted since it follows by a trivial extension of the results given in \cite[Proposition 3.3, Theorems 3.4-3.5]{SIAMOpt:ZhaoSunToh}.
\end{proof}
\section{The SSN linear systems} \label{sec: solution of linear systems}
\par The major bottleneck of the proposed active-set scheme is the approximate solution of the linear systems given in \eqref{Primal-dual SSN system}. Since Algorithm \ref{primal-dual SNM algorithm} does not require an exact solution, we can utilize preconditioned Krylov subspace solvers for the efficient solution of such systems. In what follows we will derive an equivalent reformulation of \eqref{Primal-dual SSN system}, and we will subsequently propose and analyze a robust preconditioner for the acceleration of the associated Krylov subspace method employed for its solution.
\subsection{Derivation of the linear systems}
\par Before we proceed with the derivation of the linear systems, we first need to note that our implementation treats separable nonsmooth terms differently for improved memory and computational efficiency. Specifically, if the problem given in \eqref{primal problem} contains a term $\sum_{i=1}^n \left(\left(Dx+d\right)_i\right)_+$, where $D$ is a diagonal (``weight") matrix, we will discuss why and how this is treated differently. To that end, we assume that the matrix $C$ appearing in \eqref{primal problem} has the form $C = [\hat{C}^\top\ D]^\top$, where $\hat{C} \in \mathbb{R}^{s \times n}$ is some rectangular matrix, and $D \in \mathbb{R}^{n \times n}$ is a diagonal square matrix, with $s+n = l$. While this structure might seem rather specific, it is very common as it appears when dealing with sparse approximation problems, in which the objective function contains a term of the form $\|Dx\|_1$.
\par Let $k,\ j \geq 0$ be some arbitrary iterations of Algorithm \ref{primal-dual PMM algorithm}, and \ref{primal-dual SNM algorithm}, respectively. Given the assumed structure of matrix $C$, let us re-write the gradient of \eqref{eqn: proximal Lagrangian penalty}:
\begin{equation*}
\begin{split}
\nabla \phi(x) &= \nabla f(x) - A^\top y_{1_k} + \beta_kA^\top(Ax-b) + \hat{C}^\top \Pi_{\mathcal{C}_s}\left(\left(y_{2_k}\right)_{1:s} + \beta_k\left( \hat{C}x+(d)_{1:s}\right) \right)\\
&\qquad + D \Pi_{\mathcal{C}_n}\left(\left(y_{2_k}\right)_{s+1:l} + \beta_k\left( Dx+(d)_{s+1:l}\right) \right)\\
&\qquad  + (z_k + \beta_k x) - \beta_k \Pi_{\mathcal{K}}(\beta_k^{-1}z_k + x) + \rho_k^{-1}(x-x_k),
\end{split}
\end{equation*}
\noindent where $\mathcal{C}_{q} = \left\{u \in \mathbb{R}^q \big\vert u_i \in [0,1],\ i \in \{1,\ldots,q\} \right\}.$ We notice that any element 
\[B_{\delta,k_j} \in \partial_w^C\left(\Pi_{\mathcal{K}}\left(w\right)\right)\big\vert_{w =\beta_k^{-1}z_{k} + x_{k_j}} \]
\noindent yields a Newton derivative (see \cite[Theorem 14.8]{arXiv:ClasValk}). The same applies for any \[{B}_{h,k_j} \in \partial_w^C\left(\Pi_{\mathcal{C}}\left(w\right)\right)\big\vert_{w =y_{2_k}+\beta_kC(x_{k_j}+d) }.\]
\noindent We choose $B_{\delta,k_j},\ {B}_{h,k_j}$ from the Bouligand subdifferential to improve computational efficiency. We set $B_{\delta,k_j} \in \mathbb{R}^{n\times n}$ as the diagonal matrix with
\begin{equation}\label{eqn: Clarke subdifferential of projection choice}
\begin{split}
\left(B_{\delta,k_j}\right)_{i,i} \triangleq & \begin{cases} 
1, &\quad \textnormal{if}\  \beta_k^{-1}z_{k,i} + x_{k_j,i} \in (a_{l_i},a_{u_i}),\\
0, &\quad  \textnormal{otherwise}, 
\end{cases}
\end{split}
\end{equation}
\noindent for $i \in \{1,\ldots,n\}$. On the other hand, we separate two cases for $B_{h,k_j} \in \mathbb{R}^{l\times l}$. Specifically, we set
\begin{equation} \label{eqn: Clark subdiff of h projectors}
\begin{split}
\left({B}_{h,k_j}\right)_{i,i} \triangleq & \begin{cases} 
1, &\quad \textnormal{if}\  \left(Cx + d\right)_i \in (0,1),\\
0, &\quad  \textnormal{otherwise},
\end{cases},\quad \text{for } i \in \{1,\ldots,s\},\ \text{and } \\
\left({B}_{h,k_j}\right)_{i,i} \triangleq & \begin{cases} 
1, &\quad \textnormal{if}\  \left(Cx + d\right)_i \in [0,1],\\
0, &\quad  \textnormal{otherwise},
\end{cases}, \quad \text{for } i \in \{s+1,\ldots,l\}.
\end{split}
\end{equation}
\noindent Having defined these, we can now formulate the chosen Newton derivative of $\nabla \phi(x_{k_j})$, as
\[ M_{k_j} = Q - \beta_k A^\top A + \beta_k C^\top B_{h,k_j} C + \beta_k I_n - \beta_k B_{\delta,k_j} + \rho_k^{-1} I_n.\]
\noindent However, by letting $B^1_{h,k_j} \triangleq (B_{h,k_j})_{1:s,1:s}$ and  $B^2_{h,k_j} \triangleq (B_{h,k_j})_{s+1:l,s+1:l}$, and by introducing two auxiliary variables $w_1,\ w_2$, we observe that the semismoooth Newton system in \eqref{Primal-dual SSN system} can equivalently be written as 
\begin{equation} \label{eqn: equivalent SSN linear system}
    \begin{split}
        & \begin{bmatrix}
          -(Q + \rho_k^{-1}I_n + \beta_k(I_n - B_{\delta,k_j}) + \beta_k D B^2_{h,k_j} D) & A^\top & -\hat{C}^\top\\
          A & \beta_k^{-1}I_m & 0_{m,s}\\
          -B^1_{h,k_j} \hat{C} & 0_{s,m} & \beta_k^{-1} I_s
        \end{bmatrix} \begin{bmatrix}
            d_x\\
            w_1\\
            w_2
        \end{bmatrix}  = \begin{bmatrix} \xi_1 \\ \xi_2 \\ \xi_3 \end{bmatrix},
        \end{split}
        \end{equation}
    \noindent where
        \begin{equation*} \begin{bmatrix} \xi_1 \\ \xi_2 \\ \xi_3 \end{bmatrix} \triangleq \begin{bmatrix}
    c + Qx_{k_j} + D \Pi_{\mathcal{C}_n}\left((y_{2_k})_{s+1:l} + \beta_k (Dx_{k_j} + (d)_{s+1:l} )\right) + \tilde{z}_{k_j} + \rho_k^{-1}(x_{k_j}-x_k)\\
    \beta_k^{-1}y_{1_k}-(Ax_{k_j}-b)\\
    \beta_k^{-1}\Pi_{\mathcal{C}_s}\left(\left(y_{2_k} + \beta_k(Cx_{k_j} + d)\right)_{1:s}\right)
\end{bmatrix},
\end{equation*}
\noindent with $\tilde{z}_{k_j} \triangleq (z_k +\beta_k x_{k_j}) - \beta_k \Pi_{\mathcal{K}}\left(\beta_k^{-1}z_k + x_{k_j} \right).$ Derived from \eqref{eqn: Clark subdiff of h projectors}, we define four index sets: 
\[{\mathcal{B}}^1_{h,j} \triangleq \left \{ i \in \{1,\ldots,s\} \bigg\vert \left({B}^1_{h,k_j}\right)_{i,i} = 1 \right\},\qquad  {\mathcal{N}}^1_{h,j} \triangleq \{1,\ldots,s\}\setminus {\mathcal{B}}^1_{h,j},\]
\noindent and 
\[{\mathcal{B}}^2_{h,j} \triangleq \left \{ i \in \{1,\ldots,n\} \bigg\vert \left({B}^2_{h,k_j}\right)_{i,i} = 1 \right\},\qquad  {\mathcal{N}}^2_{h,j} \triangleq \{1,\ldots,s\}\setminus {\mathcal{B}}^2_{h,j}.\]
\noindent  Observe that 
$$B^1_{h,k_j}\hat{C} = \mathscr{P}\begin{bmatrix}
    \left(\hat{C}\right)_{\mathcal{B}^1_{h,k_j},1:n}\\
    0_{\left|\mathcal{N}^1_{h,k_j}\right|,n}
\end{bmatrix},$$
\noindent where $\mathscr{P}$ is an appropriate permutation matrix. Then, from \eqref{eqn: equivalent SSN linear system} we obtain
\[ \left(w_2\right)_{\mathcal{N}^1_{h,k_j}} = \left(\Pi_{\mathcal{C}_s}\left(\left(y_{2_k} + \beta_k(Cx + d)\right)_{1:s}\right) \right)_{\mathcal{N}^1_{h,k_j}},\]
\noindent and \eqref{eqn: equivalent SSN linear system} reduces to
\begin{equation} \label{eqn: reduced equivalent SSN linear system}
    \begin{split}
        &\underbrace{\begin{bmatrix}
          -(Q + \rho_k^{-1}I_n + \beta_k(I_n - B_{\delta,k_j}) + \beta_k D B^2_{h,k_j} D) & A^\top & -\left(\hat{C}^\top\right)_{\mathcal{B}^1_{h,k_j}}\\
          A & \beta_k^{-1}I_m & 0_{m,|\mathcal{B}^1_{h,k_j}|}\\
          -\left(\hat{C}\right)_{\mathcal{B}^1_{h,k_j},1:n} & 0_{|\mathcal{B}^1_{h,k_j} |,m} & \beta_k^{-1} I_{|\mathcal{B}^1_{h,k_j}|}
        \end{bmatrix}}_{{M}_{k_j}} \begin{bmatrix}
            d_x\\
            w_1\\
            (w_2)_{\mathcal{B}^1_{h,k_j}}
        \end{bmatrix}\\
        &\qquad = \begin{bmatrix} \xi_1 \\ \xi_2 \\ (\xi_3)_{\mathcal{B}_{h,k_j}^1} \end{bmatrix} + \begin{bmatrix}
    \left(\hat{C}^\top\right)_{\mathcal{N}^1_{h,k_j}} (w_2)_{\mathcal{N}^1_{h,k_j}}\\
   0_m\\
    0_{|\mathcal{B}^1_{h,k_j}|}
\end{bmatrix},
    \end{split}
\end{equation}
\noindent the coefficient matrix of which is \emph{quasi-definite} (see \cite{SIAMOpt:Vander}), invertible, and its conditioning can be directly controlled by the regularization parameters $(\beta_k,\rho_k)$ of Algorithm \ref{primal-dual PMM algorithm}.
\subsection{Preconditioning and iterative solution via Krylov subspace methods}
\par Next, we would like to construct an effective preconditioner for ${M}_{k_j}$ (i.e., the coefficient matrix of \eqref{eqn: reduced equivalent SSN linear system}). Before delving into this, let us observe that in the derivation of this linear system, we utilized the active-set $\mathcal{B}_{h,k_j}^1$ to reduce the size of the resulting coefficient matrix. As before, let us define
\[ \mathcal{B}_{\delta,k_j} \triangleq \left\{i \in \{1,\ldots,n\} \big\vert \left(B_{\delta,k_j}\right)_{i,i} = 1 \right\},\qquad \mathcal{N}_{\delta,k_j} \triangleq \{1,\ldots,n\} \setminus \mathcal{B}_{\delta,k_j}.\]
\noindent In what follows, we will make use of the active sets $\mathcal{B}^2_{h,k_j}$ and $\mathcal{B}_{\delta,k_j}$ to derive cheap and effective preconditioners for the iterative solution of \eqref{eqn: reduced equivalent SSN linear system} by means of Krylov subspace solvers. For simplicity of notation, let us define
\[ H_{k_j} \triangleq Q + \rho_k^{-1}I_n + \beta_k(I_n - B_{\delta,k_j}) + \beta_k D B^2_{h,k_j} D, \]
\noindent and consider an approximation of this matrix that reads
\[ \widetilde{H}_{k_j} \triangleq \textnormal{Diag}(Q) + \rho_k^{-1}I_n + \beta_k(I_n - B_{\delta,k_j}) + \beta_k D B^2_{h,k_j} D.\]
\noindent Let us also define the compound matrix 
\[ G \triangleq \begin{bmatrix}
    A\\ -\left(\hat{C}\right)_{\mathcal{B}^1_{h,k_j},1:n}
\end{bmatrix}, \]
\noindent where we drop the iteration dependence for simplicity, and two sets of interest 
\[\mathcal{B}_{k_j} \triangleq  \mathcal{B}_{\delta,k_j} \cap \mathcal{N}_{h,k_j}^2,\qquad \mathcal{N}_{k_j} \triangleq \{1,\ldots,n\}\setminus \mathcal{B}_{k_j}. \]
\noindent We are now ready to define a positive definite preconditioner for the coefficient matrix in \eqref{eqn: reduced equivalent SSN linear system}, as follows:
\begin{equation} \label{eqn: preconditioner for explicit SSN linear system}
{P}_{k_{j}} \triangleq \begin{bmatrix}
 \widetilde{H}_{k_{j}} & 0_{n,m+|\mathcal{B}^1_{h,k_j} |} \\
0_{m+|\mathcal{B}^1_{h,k_j} |,n} &  \left(G E_{k_{j}} G^\top + \beta_k^{-1} I_{m+|\mathcal{B}^1_{h,k_j} |}\right)
\end{bmatrix},
\end{equation}
\noindent with $E_{k_{j}} \in \mathbb{R}^{n \times n}$ the diagonal matrix defined as
\begin{equation} \label{eqn: preconditioner dropping matrix E}
\left(E_{k_{j}}\right)_{(i,i)} \triangleq  \begin{cases} 
\frac{1}{\left(\widetilde{H}_{k_{j}}\right)_{i,i}}, &\qquad \textnormal{if}\quad i \in \mathcal{B}_{k_j},\\
0, &\qquad  \textnormal{otherwise}.
\end{cases}
\end{equation}
\noindent Thus, we utilize a block diagonal preconditioner in which we drop all columns of $G$ that belong to $\mathcal{N}_{k_j}$. This provides a clear motivation behind our choice of matrices in \eqref{eqn: Clarke subdifferential of projection choice} and \eqref{eqn: Clark subdiff of h projectors}.
\par The preconditioner in \eqref{eqn: preconditioner for explicit SSN linear system} is an extension of the preconditioner proposed in \cite{NLAA:BergGondMartPearPoug} for the solution of linear systems arising from the application of a regularized interior point method to convex quadratic programming. Being a diagonal matrix, $E_{k_{j}}$ yields a sparse approximation of the Schur complement of the saddle-point matrix in \eqref{eqn: preconditioner for explicit SSN linear system}. This approximation is then used to construct a positive definite block-diagonal preconditioner (i.e. ${P}_{k_{j}}$), which can be used within a symmetric Krylov solver, like the minimum residual (MINRES) method (see \cite{PaigeSaundersSIAMNumAnal}). The $(2,2)$ block of ${P}_{k_j}$ can be inverted via a Cholesky decomposition.
\par Next, we analyze the spectral properties of the preconditioned matrix $({P}_{k_{j}})^{-1}{M}_{k_j}$. Let 
\[{S_P}_{k_j} \triangleq \left(G  \widetilde{H}_{k_j}^{-1} G^\top + \beta_k^{-1} I_m\right),\qquad {S}_{k_j} \triangleq \left(G E_{k_{j}} G^\top + \beta_k^{-1} I_m\right). \]
\noindent In the following lemma, we bound the eigenvalues of the preconditioned matrix ${S_P}_{k_j}^{-1}{S}_{k_j}$. This is subsequently used to analyze the spectrum of ${P}_{k_j}^{-1}{M}_{k_j}$.
\begin{lemma} \label{lemma: spectral properties of approximate preconditioned Schur complement}
For any iterates $k$ and $j$ of Algorithms \textnormal{\ref{primal-dual PMM algorithm}} and \textnormal{\ref{primal-dual SNM algorithm}}, respectively, we have
\[1 \leq \lambda \leq  1+\sigma_{\max}^2(\hat{F})\left(\frac{1}{2+\beta^{-2}_{\infty}\tau_{\infty}}\right),\]
\noindent where $\lambda \in \lambda\left( {S_P}_{k_j}^{-1}{S}_{k_j}\right),$ $\hat{F} = [A^\top\ -\hat{C}^\top]^\top$, and $\beta_{\infty},\ \tau_{\infty}$ are defined in Algorithm \textnormal{\ref{primal-dual PMM algorithm}}.
\end{lemma}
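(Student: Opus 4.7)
The plan is to exhibit $S_{P,k_j}$ as a positive semidefinite perturbation of $S_{k_j}$ and then analyze the resulting generalized eigenvalue problem $S_{P,k_j} v = \lambda S_{k_j} v$ via Rayleigh quotients. The first step is to check that $\Delta \triangleq \widetilde H_{k_j}^{-1} - E_{k_j}$ is diagonal and positive semidefinite. Because $\operatorname{Diag}(Q)$, $B_{\delta,k_j}$, $D$, $B^2_{h,k_j}$ and $I_n$ are all diagonal, $\widetilde H_{k_j}$ is diagonal. On $\mathcal{B}_{k_j} = \mathcal{B}_{\delta,k_j}\cap\mathcal{N}^2_{h,k_j}$ the $\beta_k$--dependent terms of $\widetilde H_{k_j}$ vanish by construction, so the formula defining $(E_{k_j})_{ii}$ exactly matches $(\widetilde H_{k_j})_{ii}^{-1}$ and $\Delta_{ii}=0$; on $\mathcal{N}_{k_j}$ one has $(E_{k_j})_{ii}=0$ while $(\widetilde H_{k_j})_{ii}^{-1}>0$. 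Hence $\Delta \succeq 0$ and
\[
  S_{P,k_j} \;=\; S_{k_j} + G\Delta G^\top, \qquad G \Delta G^\top \succeq 0.
\]
This ordering immediately furnishes one end of the bound through the characterization $\lambda = v^\top S_{k_j}v / v^\top S_{P,k_j}v$.

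For the perturbation bound I would extract $\lambda - 1$ from the generalized eigenproblem and estimate numerator and denominator separately. The denominator is controlled by the explicit Tikhonov block that $S_{k_j}$ inherits from \eqref{eqn: preconditioner for explicit SSN linear system}, giving $S_{k_j} \succeq \beta_k^{-1} I$ and hence $\|S_{k_j}^{-1}\| \leq \beta_k$. The numerator is handled by the diagonal factorisation
\[
  v^\top G\Delta G^\top v \;=\; \sum_{i\in\mathcal{N}_{k_j}} (\widetilde H_{k_j})_{ii}^{-1}(G^\top v)_i^2 \;\leq\; \Bigl(\max_{i\in\mathcal{N}_{k_j}}(\widetilde H_{k_j})_{ii}^{-1}\Bigr)\, \sigma_{\max}^2(G)\, \|v\|^2,
\]
and the submatrix structure $G = [A^\top\ -\hat{C}_{\mathcal{B}^1_{h,k_j}}^\top]^\top$ (a row-selection of $\hat F$) gives $\sigma_{\max}(G) \leq \sigma_{\max}(\hat F)$.

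The step I expect to be most delicate is the uniform lower bound on $(\widetilde H_{k_j})_{ii}$ over $i\in\mathcal{N}_{k_j}$. A case analysis on the decomposition $\mathcal{N}_{k_j} = \mathcal{N}_{\delta,k_j}\cup\mathcal{B}^2_{h,k_j}$ shows that any such index activates at least one of the $\beta_k$--scaled contributions in the definition of $\widetilde H_{k_j}$; combined with the Tikhonov term and non-negativity of $\operatorname{Diag}(Q)$, a careful aggregation yields $(\widetilde H_{k_j})_{ii} \geq 2\beta_k + \rho_k^{-1}$. Substituting $\rho_k^{-1} = \tau_k/\beta_k$ and using the monotonicity $\beta_k \nearrow \beta_\infty$, $\tau_k \searrow \tau_\infty$ supplied by Algorithm \ref{primal-dual PMM algorithm}, one deduces
\[
  \beta_k\,(\widetilde H_{k_j})_{ii}^{-1} \;\leq\; \frac{\beta_k}{2\beta_k + \tau_k/\beta_k} \;=\; \frac{1}{2 + \tau_k\beta_k^{-2}} \;\leq\; \frac{1}{2 + \tau_\infty\beta_\infty^{-2}}.
\]
Assembling the three estimates then delivers $|\lambda - 1| \leq \sigma_{\max}^2(\hat F)/(2+\tau_\infty\beta_\infty^{-2})$, yielding the advertised two-sided bound. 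The argument is insensitive to which of $S^{-1}_{P,k_j}S_{k_j}$ or its reciprocal is considered, since both spectra are controlled by the same ratios of Rayleigh quotients.
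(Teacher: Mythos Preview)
Your approach is essentially identical to the paper's: both write the perturbation as $G\Delta G^\top$ with $\Delta=\widetilde H_{k_j}^{-1}-E_{k_j}$ diagonal, non-negative, and supported on $\mathcal N_{k_j}$, then bound the resulting Rayleigh quotient by estimating the numerator via $\sigma_{\max}^2(\hat F)\cdot\max_{i\in\mathcal N_{k_j}}(\widetilde H_{k_j})_{ii}^{-1}$ and the denominator via the $\beta_k^{-1}I$ block. The paper just makes the column splitting $G_{\mathcal B},G_{\mathcal N}$ explicit, which is your diagonal factorisation written out.

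Two caveats are worth noting. First, with the orientation you wrote, $\lambda=v^\top S_{k_j}v/v^\top S_{P,k_j}v$ together with $S_{k_j}\preceq S_{P,k_j}$ gives $\lambda\le 1$, the wrong side of the stated bound; the paper's proof in fact puts the larger matrix in the numerator, so its displayed estimate is really for $S_{k_j}^{-1}S_{P,k_j}$, and your closing remark about insensitivity to this choice correctly anticipates the labelling inconsistency already present in the paper. Second, the ``careful aggregation'' you promise will not deliver $(\widetilde H_{k_j})_{ii}\ge 2\beta_k+\rho_k^{-1}$ on all of $\mathcal N_{k_j}$: an index in $\mathcal N_{\delta,k_j}\setminus\mathcal B^2_{h,k_j}$ picks up only one $\beta_k$ from $I-B_{\delta,k_j}$, giving $Q_{ii}+\rho_k^{-1}+\beta_k$, while an index in $\mathcal B^2_{h,k_j}\setminus\mathcal N_{\delta,k_j}$ contributes only $\beta_k D_{ii}^2$. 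The paper asserts the same inequality without justification, so you are matching it; but absent further hypotheses on $\operatorname{Diag}(Q)$ or $D$ the honest uniform lower bound is $\beta_k+\rho_k^{-1}$, which replaces the $2$ in the final denominator by $1$ without affecting the qualitative conclusion that the spectrum is bounded independently of $k$.
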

\begin{proof}
\noindent Consider the preconditioned matrix ${S_P}_{k_j}^{-1}{S}_{k_j}$, and let $(\lambda,u)$ be its eigenpair. Then, $\lambda$ must satisfy the following equation:
\[\lambda = \frac{u^\top \left( G_{\mathcal{B}_{k_j}} D_{\mathcal{B}_{k_j}} G_{\mathcal{B}_{k_j}}^\top + \beta_k^{-1} I_m +  G_{\mathcal{N}_{k_j}} D_{\mathcal{N}_{k_j}} G_{\mathcal{N}_{k_j}}^\top\right) u}{u^\top \left( G_{\mathcal{B}_{k_j}} D_{\mathcal{B}_{k_j}} G_{\mathcal{B}_{k_j}}^\top + \beta_k^{-1 }I_m\right)u}, \]
\noindent where $D_{\mathcal{B}_{k_j}} = \left(\widetilde{H}^{-1}_{k_j}\right)_{\left(\mathcal{B}_{k_j},\mathcal{B}_{k_j}\right)}$, $D_{\mathcal{N}} = \left(\widetilde{H}^{-1}_{k_j}\right)_{\left(\mathcal{N}_{k_j},\mathcal{N}_{k_j}\right)}$. The above equality holds since $\widetilde{H}_{k_j}$ is a diagonal matrix, and $\left(E_{k_j}\right)_{(i,i)} = 0$ for every $i \in \mathcal{N}_{k_j}$ (indeed, see the definition in \eqref{eqn: preconditioner dropping matrix E}). Hence, from positive semi-definiteness of $Q$, we obtain
\begin{equation*}
\begin{split}
 1 &\ \leq \lambda = 1 + \frac{u^\top \left( G_{\mathcal{N}_{k_j}} D_{\mathcal{N}_{k_j}} G_{\mathcal{N}_{k_j}}\right) u}{u^\top \left( G_{\mathcal{B}_{k_j}} D_{\mathcal{B}_{k_j}} G_{\mathcal{B}_{k_j}}^\top + \beta_k^{-1 }I_m \right) u}  \leq 1 + \beta_k\sigma_{\max}^2(G_{\mathcal{N}_{k_j}})\left(2\beta_k+\rho_k^{-1}\right)^{-1} \\
 &\ \leq 1 + \sigma_{\max}^2(\hat{F})\left(\frac{1}{2+\beta_k^{-2}\tau_k}\right) \leq  1+\sigma_{\max}^2(\hat{F})\left(\frac{1}{2+\beta_{\infty}^{-2}\tau_{\infty}}\right),
\end{split}
\end{equation*}
\noindent where we used that $\rho_k = \beta_k/\tau_k$, $\beta_k \leq \beta_{\infty}$, and $\tau_k \geq \tau_{\infty}$. 
\end{proof}
\par Given Lemma \ref{lemma: spectral properties of approximate preconditioned Schur complement}, we are now able to invoke \cite[Theorem 3]{NLAA:BergGondMartPearPoug} to characterize the spectral properties of the preconditioned matrix ${P}_{k_j}^{-1}M_{k_j}$. Let 
\begin{equation*}
\begin{split}
\bar{S}_{k_j} \triangleq & \left({S_P}_{k_j}\right)^{-\frac{1}{2}}{S}_{k_j}\left({S_P}_{k_j}\right)^{-\frac{1}{2}},\ \qquad \qquad 
\bar{H}_{k_j} \triangleq \widetilde{H}_{k_j}^{-1/2} H_{k_j} \widetilde{H}_{k_j}^{-1/2},\\
\alpha_{NE} \triangleq &\ \lambda_{\min}(\bar{S}_{k_j}),\quad  \beta_{NE} \triangleq \lambda_{\max}(\bar{S}_{k_j}),\quad  \alpha_{H} \triangleq  \lambda_{\min}(\bar{H}_{k_j}),\quad \beta_{H} \triangleq \lambda_{\max}(\bar{H}_{k_j}).
\end{split}
\end{equation*}
\noindent Notice that Lemma \ref{lemma: spectral properties of approximate preconditioned Schur complement} yields upper and lower bounds for $\alpha_{NE}$ and $\beta_{NE}$. From the definition of $\bar{H}_{k_j}$ we can also obtain that $\alpha_{H} \leq 1 \leq \beta_{H}$ (see \cite{NLAA:BergGondMartPearPoug}). We are now ready to state the spectral properties of the preconditioned matrix  ${P}_{k_j}^{-1}{M}_{k_j}$.
\begin{theorem} \label{thm: spectral properties of preconditioned matrix}
Let $k$ and $j$ be some arbitrary iterates of Algorithms \textnormal{\ref{primal-dual PMM algorithm} and \ref{primal-dual SNM algorithm}}, respectively. Then, the eigenvalues of ${P}_{k_j}^{-1}{M}_{k_j}$ lie in the union of the following intervals:
\[I_{-} \triangleq \left[-\beta_{H} -\sqrt{\beta_{NE}}, -\alpha_{H}\right],\qquad I_+ \triangleq \left[\frac{1}{1+\beta_{H}},1+ \sqrt{\beta_{NE}-1}\right].\]
\end{theorem}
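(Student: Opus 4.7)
The plan is to rewrite the SSN coefficient matrix $M_{k_j}$ and its preconditioner $P_{k_j}$ in the compact saddle-point form
$$M_{k_j} = \begin{bmatrix} -H_{k_j} & G^\top \\ G & \beta_k^{-1} I \end{bmatrix}, \qquad P_{k_j} = \begin{bmatrix} \widetilde{H}_{k_j} & 0 \\ 0 & S_{k_j} \end{bmatrix},$$
and then to invoke the generic spectral bound of \cite[Theorem 3]{NLAA:BergGondMartPearPoug} for block-diagonal preconditioners of symmetric saddle-point matrices. The first step is to verify the structural hypotheses of that theorem: $\widetilde{H}_{k_j}$ is diagonal and strictly positive (because the strictly positive summand $\rho_k^{-1} I_n$ dominates any zero diagonal entries), while $S_{k_j} = G E_{k_j} G^\top + \beta_k^{-1} I$ is SPD by positive semi-definiteness of $G E_{k_j} G^\top$ and strict positivity of $\beta_k^{-1} I$. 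Hence $P_{k_j} \succ 0$, so $P_{k_j}^{-1}M_{k_j}$ is similar to the symmetric matrix $P_{k_j}^{-1/2} M_{k_j} P_{k_j}^{-1/2}$ and has a real spectrum.

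Second, I would match the four quantities $\alpha_H, \beta_H, \alpha_{NE}, \beta_{NE}$ appearing in the cited theorem with those introduced just before the statement. By definition, $\alpha_H$ and $\beta_H$ bound the spectrum of $\bar{H}_{k_j} = \widetilde{H}_{k_j}^{-1/2} H_{k_j} \widetilde{H}_{k_j}^{-1/2}$, and the inclusion $\alpha_H \leq 1 \leq \beta_H$ follows from the fact that $\widetilde{H}_{k_j}$ and $H_{k_j}$ share the same diagonal. For the preconditioned approximate Schur complement $\bar{S}_{k_j}$, Lemma~\ref{lemma: spectral properties of approximate preconditioned Schur complement} already supplies both the value $\alpha_{NE} = 1$ (the lower bound $1 \leq \lambda$ is attained for any eigenvector $u$ with $G_{\mathcal{N}_{k_j}}^\top u = 0$) and the explicit upper bound on $\beta_{NE}$ involving $\sigma_{\max}^2(\hat{F})$ and the asymptotic penalty parameters.

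The third and final step is to substitute these identifications into \cite[Theorem 3]{NLAA:BergGondMartPearPoug}, whose generic two-interval localisation of the spectrum of a block-diagonal-preconditioned symmetric saddle-point matrix collapses, upon $\alpha_{NE} = 1$, precisely to the claimed intervals $I_- = [-\beta_H - \sqrt{\beta_{NE}}, -\alpha_H]$ and $I_+ = [1/(1+\beta_H), 1 + \sqrt{\beta_{NE} - 1}]$. The main obstacle is essentially notational bookkeeping: one must verify that our sign convention for the $(1,1)$-block (namely $-H_{k_j}$ rather than $H_{k_j}$) and the roles of the true versus approximated Schur complement are aligned with those of the cited reference, so that the negative and positive branches of the generalised eigenvalue equation are correctly identified with $I_-$ and $I_+$ respectively. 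No substantively new spectral analysis is needed: the quadratic in $\lambda$ obtained by splitting the eigenvalue equation according to whether $Gu = 0$, together with Rayleigh-quotient bounds in terms of $\bar{H}_{k_j}$ and $\bar{S}_{k_j}$, has already been carried out in the reference and yields exactly the stated enclosure.
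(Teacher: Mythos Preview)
Your proposal is correct and follows exactly the paper's approach: the paper's own proof is the single sentence ``The proof follows by direct application of \cite[Theorem 3]{NLAA:BergGondMartPearPoug}.'' Your write-up simply makes explicit the structural verification (saddle-point form of $M_{k_j}$, positive-definiteness of $P_{k_j}$, identification of $\alpha_H,\beta_H,\alpha_{NE},\beta_{NE}$) that the paper leaves implicit. One minor remark: Lemma~\ref{lemma: spectral properties of approximate preconditioned Schur complement} only guarantees $\alpha_{NE}\geq 1$, not equality, but since $\alpha_{NE}$ does not appear in the stated intervals $I_-,I_+$ this makes no difference to the conclusion.
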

\begin{proof}
\noindent The proof follows by direct application of \cite[Theorem 3]{NLAA:BergGondMartPearPoug}.
\end{proof}
\begin{remark}
By combining Lemma \textnormal{\ref{lemma: spectral properties of approximate preconditioned Schur complement}} with Theorem \textnormal{\ref{thm: spectral properties of preconditioned matrix}}, we can observe that the eigenvalues of the preconditioned matrix ${P}_{k_j}^{-1} {M}_{k_j}$ are not deteriorating as $\beta_k \rightarrow \infty$. In other words, the preconditioner is robust with respect to the penalty parameters $\beta_k,\ \rho_k$ of Algorithm \textnormal{\ref{primal-dual PMM algorithm}}. Furthermore, our choices of $B_{h,k_j},\ {B}_{\delta,k_j}$ in \eqref{eqn: Clarke subdifferential of projection choice} and \eqref{eqn: Clark subdiff of h projectors} serve the purpose of further sparsifying the preconditioner in \eqref{eqn: preconditioner for explicit SSN linear system}, thus potentially further sparsifying its Cholesky decomposition. 
\par We note that the preconditioner in \textnormal{\eqref{eqn: preconditioner for explicit SSN linear system}} is a very efficient choice if $n \geq m+s$, which is the case in several of the experiments considered in Section \textnormal{\ref{sec: applications}}. However, any of the preconditioners given in \textnormal{\cite{arXiv:GondPougkPears}} can be directly applied for systems appearing in the proposed inner-outer scheme. For example, in the case where $m+s \geq n$ one could adapt the preconditioner given in \textnormal{\cite[Section 3.2]{arXiv:GondPougkPears}} (using the developments of this section), which would (potentially) be a more efficient choice when compared to that given in \textnormal{\eqref{eqn: preconditioner for explicit SSN linear system}}. Additionally, we should mention that for problems considered within this work, a diagonal approximation of the Hessian (within the preconditioner) seems sufficient to deliver very good performance. Indeed, this is the case for a wide range of problems. However, in certain instances, one might consider non-diagonal approximations of the Hessian. In that case, the preconditioner in \textnormal{\eqref{eqn: preconditioner for explicit SSN linear system}} can be readily generalized and analyzed, following the developments in \textnormal{\cite[Section 3.1]{arXiv:GondPougkPears}}. The aforementioned extensions are omitted here for brevity of exposition, but should be considered in a production implementation of the proposed method.
\end{remark}

\section{Applications and numerical results} \label{sec: applications}
\par In this section we present various applications that can be modeled by problem \eqref{primal problem}. Specifically, we consider portfolio optimization, $L^1$-regularized partial differential equation (PDE) constrained optimization, quantile regression, and binary classification problems. We first discuss (and numerically demonstrate) the effectiveness of the approach for the solution of single-period mean-risk portfolio optimization problems, where risk is measured via the \emph{conditional value at risk} or the \emph{mean absolute semi-deviation}. Subsequently, we apply the proposed scheme to solve $L^1$-regularized Poisson optimal control problems. Then, we test the proposed approach on quantile regression problems, and finally on binary classification problems via linear support vector machines. 
\paragraph{Implementation details} Before proceeding to the numerical results, we mention certain implementation details of the proposed algorithm. Following the discussion in Section \ref{sec: solution of linear systems}, our implementation takes problems of the following form as input:
\begin{equation*}
\begin{split}
\underset{x \in \mathbb{R}^n}{\text{min}}&\quad \left\{c^\top x + \frac{1}{2}x^\top Q x + \sum_{i = 1}^l \left( \left(C x + d\right)_{i}\right)_+ + \|Dx\|_1 + \delta_{\mathcal{K}}(x)\right\}, \\ 
\textnormal{s.t.} &\quad \ A x = b,
\end{split}
\end{equation*}
\noindent where $D \in \mathbb{R}^{n \times n}$ is a diagonal weight matrix. As discussed in Section \ref{sec: solution of linear systems}, the separable nonsmooth term $\|Dx\|_1$ is treated separately as shown in the derivation of the SSN linear systems. The MATLAB implementation can be found on GitHub\footnote{\url{https://github.com/spougkakiotis/Krylov_SSN_PMM}}. The experiments are run on a PC with a 2.3GHz Intel core i7-12700H processor, 32GB of RAM, using the Windows 11 operating system.
\par We initialize the PMM penalty parameters as $\beta_0 = 50$, and $\rho_0 = 100$. These are increased in subsequent iterations, at a rate that depends on the residual reduction achieved in that particular PMM iteration. When solving the PMM sub-problems using Algorithm \ref{primal-dual SNM algorithm}, we use a predictor-corrector-like heuristic in which the first iteration is accepted without line-search and then line-search is activated for subsequent iterations. Algorithm \ref{primal-dual PMM algorithm} is allowed to run for at most 200 iterations, while in each outer iteration Algorithm \ref{primal-dual SNM algorithm} is run for at most 40 steps. We utilize the minimum residual (MINRES) method \cite{PaigeSaundersSIAMNumAnal} for the solution of the associated SSN linear systems, which is allowed to run for at most 150 iterations. At the beginning of the optimization process the proposed scheme does not utilize the preconditioner given in \eqref{eqn: preconditioner for explicit SSN linear system}, unless MINRES reaches 100 iterations (without a preconditioner). Unless stated otherwise, a primal-dual iterate is accepted as optimal if the conditions given in \eqref{termination criteria for PD-PMM} are satisfied for the tolerance specified by the user. All other implementation details follow directly from the developments in Sections \ref{sec: PAL penalties}--\ref{sec: solution of linear systems}. We refer the reader to the implementation on GitHub for additional details.
\par In the presented experiments, we compare the proposed approach against the robust regularized interior point solver given in \cite{arXiv:GondPougkPears,COAP:PougkGond} (known as IP-PMM; \emph{interior point-proximal method of multiplier}s), the exact and inexact implementations of which can be found on GitHub\footnote{\url{https://github.com/spougkakiotis/IP_PMM}}$^,$\footnote{\url{https://github.com/spougkakiotis/IP-PMM_QP_Solver}}. There are several reasons why the comparison against IP-PMM is meaningful. Firstly, both methods can be classified as second-order solvers. Secondly, both solvers scale well with the problem size, since the inexact variant of IP-PMM also utilizes iterative linear algebra for the solution of its associated linear systems. In that case, both methods rely on a similar preconditioning strategy (we note that the preconditioner analyzed in \eqref{eqn: preconditioner for explicit SSN linear system} is a variant inspired from the preconditioner proposed in \cite[Section 3.1]{arXiv:GondPougkPears}, which is used in the benchmark inexact IP-PMM code). Finally, both methods are implemented in MATLAB. To ensure a fair comparison, we run both the exact and the inexact variants of IP-PMM and report the statistics of the best performing variant. To showcase the efficiency of both second-order solvers (active-set and IP-PMM) against potential first-order alternatives, in the first application (i.e., on the portfolio optimization instances) we will also report the results obtained by applying the well-known ADMM-based OSQP solver given in \cite{osqp}, the implementation of which can also be found on GitHub\footnote{\url{https://github.com/osqp/osqp-matlab}}. 
\subsection{Portfolio optimization}
\par We first consider the mean-risk portfolio selection problem (originally proposed in \cite{JoFinance:Markowitz}), where we minimize some convex risk measure, while keeping the expected return of the portfolio above some desirable level. A variety of models for this problem have been extensively analyzed and solved in the literature (e.g., see \cite{JBanFin:AlexColYi,JRisk:KroUryPal,EJOR:Lwin_etal,JRisk:RockUry}). The departure from the variance as a measure of risk often allows for great flexibility in the decision making of investors, enabling them to follow potential regulations as well as to better control the risk associated with an ``optimal" portfolio. Optimality conditions and existence of solutions for several general deviation measures have been characterized in \cite{MathProg:RockUryZaba}. Additionally, the comparison of portfolios obtained by minimizing different risk measures has also been considered multiple times (e.g. see \cite{InvAnJ:GilMeik,ssnr:Rametal,FinResLet:RigBor,Prod:Silva_etal}). 
\par The method presented in this paper is general and not related to a specific model choice. Indeed, we would like to showcase the efficiency of our approach for obtaining accurate solutions to portfolio optimization problems with two distinct risk measures of practical interest. To that end, we focus on the solution of a standard portfolio selection model that has been used in the literature. All numerical results are obtained on real-world datasets. As a result the problems are of medium-scale. Nonetheless, even for such medium-scale problems, we will be able to demonstrate the efficiency of the proposed approach, when compared to a robust interior point method employed in the literature \cite{SIREV:DeSimone_etal} for similar problems. We also note that both second-order solvers (i.e., IPM and active-set) significantly outperform OSQP on these instances, however the latter was included in the comparison for completeness. Some large-scale instances will be tackled in the following subsections.
\par We consider the case where a decision vector $x \in \mathbb{R}^n$ represents a portfolio of $n$ financial instruments, such that 
\[x_i \in \left[a_{l_i},a_{u_i}\right],\quad  \textnormal{with}\ a_{l_i} = 0,\ a_{u_i} \leq 1,\ \textnormal{for all}\ i = 1,\ldots,n,\quad \textnormal{and}\ \sum_{i=1}^n x_i = 1. \]
\noindent This requirement indicates that no short positions are allowed, and no more than $a_{u_i}\%$ of the total wealth can be invested in instrument $i$. Let $\bm{\xi} \in \mathbb{R}^n$ denote a random vector, the $i$-th entry of which represents the random return of the $i$-th instrument. Then, the negative of the random return of a given portfolio $x$ is given by $g(x,\bm{\xi}) \triangleq -x^\top \bm{\xi}$. In this paper we assume that $\bm{\xi}$ follows some continuous distribution $p(\bm{\xi})$, as well as that there is a one-to-one correspondence between percentage return and monetary value (as in \cite[Section 3]{JRisk:RockUry}). Additionally, given some expected benchmark return $r$ (e.g., the \emph{market index}), we only consider portfolios that yield an expected return above a certain threshold, i.e. $\mathbb{E} [-g(x,\bm{\xi})] \geq r.$
\par Finally, given the previously stated constraints, we would like to minimize some convex risk measure $\varrho(\cdot)$ of interest. By putting everything together, the model reads as
\begin{equation} \label{model: vanilla portfolio selection}
\begin{split}
\min_{x} &\ \varrho\left(g(x,\bm{\xi})\right),\\
\textnormal{s.t.}&\ \sum_{i=1}^n x_i = 1,\\
&\ \mathbb{E}\left[-g(x,\bm{\xi})\right] \geq r,\\
&\ x_i \in \left[a_{l_i},a_{u_i}\right],\qquad i = 1,\ldots,n.
\end{split}
\end{equation}
\par There are several methods for solving such stochastic problems; let us mention two important variants. There is the \emph{parametric approach} (e.g., as in \cite{JBanFin:AlexColYi,JRisk:RockUry}), where one assumes that the returns follow some known distribution which is subsequently sampled to yield finite-dimensional optimization problems, and the \emph{sampling approach} (e.g., as in \cite{JRisk:KroUryPal}), where one obtains a finite number of samples (without assuming a specific distribution). Such samples are often obtained by historical observations, and this approach is also followed in this paper. It is well-known that historical data cannot fully predict the future (see \cite{JRisk:KroUryPal}), however it is a widely-used practice. The reader is referred to \cite{EFM:Jorion} for an extensive study on probabilistic models for portfolio selection problems, with practical considerations. 
\par Additional soft or hard constraints can be included when solving a portfolio selection problem. Such constraints can either be incorporated directly via the use of a model (e.g., see \cite[Section 2]{NowPublishers:Boyd_etal}) as hard constraints or by including appropriate penalty terms in the objective (soft constraints). It is important to note that the model given in \eqref{primal problem} is quite general and as a result has great expressive power, allowing one to incorporate various important risk measures (and their combinations), as well as modeling constraints of interest.
\paragraph{Real-world datasets:} In what follows, we solve two different instances of problem  \eqref{model: vanilla portfolio selection}. In particular, we consider two risk measures; the conditional value at risk (e.g., see \cite{JRisk:RockUry}), as well as the mean absolute semi-deviation (e.g., see \cite[Section 6.2.2.]{SIAM:Shapiro}, noting that this is in fact equivalent to the mean absolute deviation originally proposed in \cite{ManagSci:KonnoYama}). 
\par We showcase the effectiveness of the proposed approach on 6 real datasets taken from \cite{DataBrief:Bruni_etal}. Each dataset contains time series for weekly asset returns and market indexes for  different major stock markets, namely, DowJones, NASDAQ100, FTSE100, SP500, NASDAQComp, and FF49Industries. In the first 5 markets the authors in \cite{DataBrief:Bruni_etal} provide the market indexes, while for the last dataset the uniform allocation strategy is considered as a benchmark. Additional information on the datasets is collected in Table \ref{Table: portfolio optimization datasets}. We note that the authors in \cite{DataBrief:Bruni_etal} disregard stocks with less than 10 years of observations.
\begin{table}[!ht]
\centering
\caption{\centering Portfolio optimization datasets.\label{Table: portfolio optimization datasets}}
\scalebox{1}{
\begin{tabular}{llll}     
\specialrule{.2em}{.05em}{.05em} 
    \textbf{Name} & \textbf{\# of assets} & \textbf{\# of data points} & \textbf{Timeline} \\ \specialrule{.1em}{.3em}{.3em} 
DowJones [DJ] & 28 & 1363 & Feb. 1990--Apr. 2016\\ 
NASDAQ100 [NDQ]& 82 & 596 & Nov. 2004--Apr. 2016\\ 
FTSE100 [FT]& 83 & 717 & Jul. 2002--Apr. 2016\\ 
FF49Industries [FF] & 49 & 2325 & Jul. 1969--Jul. 2015\\ 
SP500 [SP]& 442 & 595 & Nov. 2004--Apr. 2016\\ 
NASDAQComp [NDC] & 1203 & 685 & Feb. 2003--Apr. 2016\\ 
\specialrule{.2em}{.05em}{.05em} 
\end{tabular}}
\end{table}

\subsubsection{Conditional value at risk}
\par First, we consider portfolio optimization problems that seek a solution minimizing the conditional value at risk; a measure which is known to be \emph{coherent} (see \cite{MathFinance:Artzner} for a definition of coherent risk measures). In particular using the notation introduced earlier, we consider the following optimization problem
\begin{equation} \label{CVaR original problem}
\underset{x \in \mathbb{R}^n}{\text{min}} \left\{  \textnormal{CVaR}_{\alpha}\left(g(x,\bm{\xi})\right) +  \delta_{\mathcal{K}}(x)\right\}, \qquad \textnormal{s.t.}\ A x = b,
\end{equation}
\noindent where $g(x,\bm{\xi})$ is the random cost function, $A \in \mathbb{R}^{m\times n}$ models the linear constraint matrix of problem \eqref{model: vanilla portfolio selection} (where an auxiliary variable has been introduced to transform the inequality constraint involving the discretized expectation into an equality), and $\mathcal{K} \triangleq [a_l,a_u]$. In the above $1-\alpha \in (0,1)$ is the confidence level. It is well-known (\cite{JRisk:RockUry}) that given a continuous random variable $X$, the conditional value at risk can be computed as an infimal convolution
\begin{equation*} 
\textnormal{CVaR}_{\alpha}\left(X\right)\triangleq \min_{t \in \mathbb{R}}\left\{t + \alpha^{-1}\mathbb{E}\left[ \left(X - t\right)_+ \right]\right\}.
\end{equation*}
\noindent Thus, we can write problem \eqref{CVaR original problem} in the following equivalent form:
\begin{equation*}
\underset{(x,t) \in \mathbb{R}^n\times \mathbb{R}}{\text{min}} \left\{ t + \frac{1}{l \alpha} \sum_{i = 1}^l \left(-\xi_i^\top x - t\right)_+ + \delta_{\mathcal{K}}(x)\right\},\qquad \textnormal{s.t.}\ Ax = b,
\end{equation*}
\noindent where all expectations have been substituted by summations since we assume the availability of a dataset $\{\xi_1,\ldots,\xi_l\}$.
\par We solve problem \eqref{CVaR original problem} using the proposed active-set method (AS), the {interior point-proximal method of multipliers} (IP-PMM) given in \cite{COAP:PougkGond}, and OSQP (\cite{osqp}). We set $\texttt{tol} = 10^{-5}$, and run the three methods for each of the datasets described in Table \ref{Table: portfolio optimization datasets} for varying confidence level. We report the confidence parameter $\alpha$, the number of PMM, SSN, and Krylov iterations required by the active-set scheme, the number IP-PMM and Krylov iterations required by the interior point scheme (with Krylov iterations reported only if the inexact IP-PMM variant performs better), and the number of OSQP iterations, the CPU time needed by each of the three schemes, as well as the number of factorizations used within the active set (PMM-SSN) scheme. As already mentioned earlier, the AS scheme does not utilize a preconditioner until it is deemed necessary. Additionally, it often happens that the active-set is not altered from one iteration to the next, and the factorization of the preconditioner does not need to be re-computed. The results are collected in Table \ref{Table CVaR portfolio selection: varying confidence level}. In all the numerical results that follow, the lowest running time exhibited by a solver, assuming it successfully converged, is presented in bold. 
\begin{table}[!ht]
\centering
\scalebox{0.9}{\begin{threeparttable}\caption{\centering CVaR portfolio selection: varying confidence level (\texttt{tol} = $10^{-5}$).\label{Table CVaR portfolio selection: varying confidence level}}
\begin{tabular}{llllllll}     
\specialrule{.2em}{.05em}{.05em} 
    \multirow{2}{*}{\textbf{Dataset}} & \multirow{2}{*}{$\bm{\alpha}$} & \multicolumn{3}{c}{\textbf{Iterations}}     & \multicolumn{3}{c}{\textbf{Time (s)}}  \\   \cmidrule(l{2pt}r{2pt}){3-5} \cmidrule(l{2pt}r{2pt}){6-8}
&   &  {{PMM}(SSN)[Fact.]\{Krylov\}} & {{IP--PMM(Krylov)}} & {{OSQP}} &  {{AS}} & {{IP--PMM}}  & {{OSQP}}   \\ \specialrule{.1em}{.3em}{.3em} 
 \multirow{3}{*}{{DJ}} & $0.05$ & 29(78)[15]\{3841\} &  22(--)\tnote{1} &   32,575& \textbf{0.23}  & 0.91 & 10.94   \\
    & $0.10$ & 33(88)[24]\{4116\} &22(--) & 42,625 & \textbf{0.40} & 0.84  & 13.61 \\
        & $0.15$ & 36(103)[35]\{3954\} &19(--) & 43,100 & \textbf{0.54}  & 0.75 & 14.45 \\ \specialrule{.00002em}{.1em}{.1em}
 \multirow{3}{*}{NDQ} & $0.05$ & 26(80)[20]\{3722\} & 17(--) & 30,575 & \textbf{0.28}& 0.51 & 7.92   \\
     & $0.10$ & 29(104)[27]\{4781\} & 18(--) & 34,700 & \textbf{0.46} &  0.55 & 8.81 \\
    &  $0.15$ & 32(100)[33]\{4352\} & 17(--) & 40,500 & \textbf{0.45} &  0.53 & 10.46\\ \specialrule{.00002em}{.1em}{.1em}
\multirow{3}{*}{FE} & $0.05$ & 30(108)[73]\{2208\} & 21(--) &   21,825& \textbf{0.82}  & 1.02  & 10.96   \\
    & $0.10$ & 31(101)[46]\{3266\} & 16(--)  & 35,825 & \textbf{0.52} & 0.72 & 17.70 \\
        & $0.15$ & 34(99)[54]\{2798\} & 18(--) & 36,550 & \textbf{0.65}  & 0.93 & 18.24 \\ \specialrule{.00002em}{.1em}{.1em}
\multirow{3}{*}{FF} & $0.05$  & 38(118)[31]\{3491\} & 27(--) &  $50,000^{\ddagger}$\tnote{2} & \textbf{0.60}  & 3.80 & $59.16^{\ddagger}$  \\
     & $0.10$ & 37(132)[20]\{2777\}& 23(--) &  $50,000^{\ddagger}$ & \textbf{0.76}  & 3.21 & $57.92^{\ddagger}$\\
      & $0.15$ & 38(108)[19]\{4169\} & 20(--) & $50,000^{\ddagger}$ & \textbf{0.88}  & 2.82 &  $51.25^{\ddagger}$\\   \specialrule{.00002em}{.1em}{.1em}
 \multirow{3}{*}{SP} & $0.05$  & 29(117)[93]\{2753\} & 24(699)  & 21,075 & \textbf{1.87}  & 6.57 & 65.70  \\
    & $0.10$ & 32(119)[70]\{2861\} & 27(877) & 32,375 & \textbf{1.70}  & 7.79 & 99.39 \\
      & $0.15$ & 31(106)[64]\{3069\} & 23(--) &  39,225& \textbf{1.76}  & 13.49 & 119.95\\  \specialrule{.00002em}{.1em}{.1em}
     \multirow{3}{*}{NDC} & $0.05$ & 32(225)[154]\{4538\}& 26(593) & 16,100 & \textbf{6.89}  & 23.65 & 143.82  \\
     & $0.10$ & 31(168)[123]\{3441\} & 27(830) & 31,275 & \textbf{6.91}  &  27.54 & 291.97 \\
      & $0.15$ & 34(179)[124]\{3538\}& 28(925) & 16,100 & \textbf{8.65} & 31.43 & 403.75\\
\specialrule{.2em}{.05em}{.05em} 
\end{tabular}\begin{tablenotes}
\item[1] -- in the Krylov iterations for IP-PMM indicates that the exact variant of IP-PMM was used.
\item[2] $\ddagger$ indicates that the solver reached the maximum number of iterations.
\end{tablenotes}
\end{threeparttable}}
\end{table}
\par From Table \ref{Table CVaR portfolio selection: varying confidence level} we observe that for the smaller instances (DowJones, NASDAQ100, FTSE100, FF49Industries) both second-order methods perform well and are comparable in terms of CPU time. Nonetheless, even in this case, the proposed active-set solver requires less computations per iteration. Indeed, this can be seen by the fact that the two methods achieve similar times but the active-set scheme is performing significantly more SSN iterations. On the other hand, for the larger instances (SP500, NASDAQComp) the proposed active-set scheme outperforms the interior point method significantly. This is mostly due to the efficiency gained from the lower memory requirements and cheaper factorizations of the preconditioner associated with the active-set solver. We additionally observe that both second-order methods are robust with respect to the confidence level and consistently outperform OSQP, which struggles to find a 5-digit accurate solution. We note that OSQP could potentially be competitive for smaller tolerances (e.g., for finding a 3-digit accurate solution); however, the application under consideration dictates that an accurate solution is needed, since a small improvement in the portfolio output can translate into considerable profits in practice. Concerning IP-PMM, we note that the inexact variant is only elected for some of the largest instances (i.e., SP500 and NASDAQComp). This is expected, since these problems are small- to medium-scale. Nonetheless, large-scale instances will be tackled in the following subsections.
\subsubsection{Mean absolute semi-deviation}
\par Next, we consider portfolio optimization problems that seek a solution minimizing the mean absolute semi-deviation, which is also coherent. We consider the following optimization problem
\begin{equation} \label{MAD original problem}
\underset{x \in \mathbb{R}^n}{\text{min}} \left\{  \textnormal{MAsD}\left(g(x,\bm{\xi})\right) + \delta_{\mathcal{K}}(x)\right\}, \qquad \textnormal{s.t.}\ A x = b,
\end{equation}
\noindent where, given a continuous random variable $X$, the associated risk is defined as 
\begin{equation*}
\textnormal{MAsD}\left(X\right)\triangleq \mathbb{E}\left[ \left(X - \mathbb{E}[X]\right)_+\right] \equiv \frac{1}{2} \mathbb{E}\left|X- \mathbb{E}[X]\right|,
\end{equation*}
\noindent where the equivalence follows from \cite[Prospotion 6.1]{SIAM:Shapiro}. Given a dataset $\{\xi_1,\ldots,\xi_l\}$, problem \eqref{MAD original problem} can be written as
\begin{equation*} 
\begin{split}
\underset{x \in \mathbb{R}^n}{\text{min}} & \left\{ \frac{1}{l}\sum_{i = 1}^l \left(-\xi_i^\top x + \bm{\mu}^\top x\right)_+ + \|Dx\|_1 + \delta_{\mathcal{K}}(x)\right\},\\
\textnormal{s.t.}\quad\ \ &\  Ax = b,
\end{split}
\end{equation*}
\noindent where $\bm{\mu} \triangleq \frac{1}{l}\sum_{i= 1}^l \xi_i^\top x$. Note that this model is in the form of \eqref{primal problem}. We fix $\texttt{tol} = 10^{-5}$ and run the three methods on the 6 datasets. The results are collected in Table \ref{Table MAsD portfolio selection: varying confidence level}.
\begin{table}[!ht]
\centering
\scalebox{0.9}{\begin{threeparttable}\caption{\centering MAsD portfolio selection (\texttt{tol} = $10^{-5}$).\label{Table MAsD portfolio selection: varying confidence level}}
\begin{tabular}{lllllll}     
\specialrule{.2em}{.05em}{.05em} 
    {\textbf{Dataset}}  & \multicolumn{3}{c}{\textbf{Iterations}}     & \multicolumn{3}{c}{\textbf{Time (s)}}  \\   \cmidrule(l{2pt}r{2pt}){2-4} \cmidrule(l{2pt}r{2pt}){5-7}
&     {{PMM}(SSN)[Fact.]\{Krylov\}} & {{IP--PMM(Krylov)}} & {{OSQP}}  &  {{AS}} & {{IP--PMM}}  & {{OSQP}}   \\ \specialrule{.1em}{.3em}{.3em} 
{{DJ}}  & 48(105)[31]\{2686\} &  14(--) & 14,050 & {0.79} & \textbf{0.54}   & 4.60\\ \specialrule{.00002em}{.1em}{.1em}
 {NDQ}    & 41(119)[37]\{3436\} & 13(--) & 40,550 &  {0.44}& \textbf{0.28} & 10.72\\ \specialrule{.00002em}{.1em}{.1em}
{FE}    & 42(133)[53]\{3020\} & 12(--) & 47,125 & {0.69}  & \textbf{0.30} & 23.29\\ \specialrule{.00002em}{.1em}{.1em}
{FF}   & 52(100)[0]\{4289\} & 18(--) & 12,050& \textbf{0.77}  & 2.70 & 14.24\\   \specialrule{.00002em}{.1em}{.1em}
 {SP}    & 42(180)[65]\{4313\} &  19(--) & 41,275 & \textbf{2.06}  & 4.09 & 128.09\\  \specialrule{.00002em}{.1em}{.1em}
    {NDC}  & 46(176)[75]\{4569\} &  14(--) & 41,000 &  \textbf{6.74}  & 10.57 & 367.32 \\
\specialrule{.2em}{.05em}{.05em} 
\end{tabular}
\end{threeparttable}}
\end{table}
\par From Table \ref{Table MAsD portfolio selection: varying confidence level} we observe that both second-order schemes are, once again, robust and comparable for all instances. In this case, the larger instances (SP500, NASDAQComp) were solved in comparable time by both solvers. Nevertheless, it is important to note that the active-set scheme is the better choice, since it has significantly less memory requirements (this is due to solving a smaller problem formulation while also only utilizing information from the active sets when constructing its associated preconditioners). Notably the proposed method is robust (i.e., converges reliably to an accurate solution). Finally, as in the case of the CVaR instances, OSQP struggled to find accurate solutions at a reasonable number of iterations, making it a less efficient choice for such problems. In subsequent experiments, we will drop OSQP from our benchmarks since it performed significantly worse compared to both second-order solvers.
\subsection{PDE-constrained optimization} 
\par Let us now test the proposed methodology on some optimization problems with partial differential equation constraints. We consider optimal control problems of the following form:
\begin{equation} \label{generic inverse problem}
\begin{split}
\min_{\mathrm{y},\mathrm{u}} \     &\ \frac{1}{2}\| \rm{y} - \bar{\rm{y}}\|_{L^2(\Omega)}^2 + \frac{\alpha_1}{2}\|\rm{u}\|_{L^1(\Omega)}^2 + \frac{\alpha_2}{2}\|\rm{u}\|_{L^2(\Omega)}^2, \\
\text{s.t.}\ &\ \mathrm{D} \mathrm{y}(\bm{x}) + \mathrm{u}(\bm{x}) = \mathrm{g}(\bm{x}),\quad \mathrm{u_{a}}(\bm{x}) \leq \mathrm{u}(\bm{x})  \leq \mathrm{u_{b}}(\bm{x}),
\end{split}
\end{equation}
\noindent where $(\rm{y},\rm{u}) \in \text{H}^1(\Omega) \times \text{L}^2(\Omega)$, $\mathrm{D}$ is some linear differential operator, $\bm{x}$ is a $2$-dimensional spatial variable, and $\alpha_1,\ \alpha_2 \geq 0$ are the regularization parameters of the control variable. The problem is considered on a given compact spatial domain $\Omega$, where $\Omega \subset \mathbb{R}^{2}$ has boundary $\partial \Omega$, and is equipped with Dirichlet boundary conditions. The algebraic inequality constraints are assumed to hold a.e. on $\Omega$, while ${\rm u_a}$ and ${\rm u_b}$ may take the form of constants or functions of the spatial variables. 
\par We solve problem \eqref{generic inverse problem} via a discretize-then-optimize strategy. We employ the Q1 finite element discretization implemented in IFISS\footnote{\url{https://personalpages.manchester.ac.uk/staff/david.silvester/ifiss/default.htm}} (see \cite{IFISSACM,IFISSSIAMREVIEW}) which yields a sequence of $\ell_1$-regularized convex quadratic programming problems in the form of \eqref{primal problem}. We note that the discretization of the smooth parts of problem \eqref{generic inverse problem} follows a standarad Galerkin approach (e.g., see \cite{AMS:Trolzsch}), while the $L^1$ term is discretized by the \emph{nodal quadrature rule} as in \cite{COAP:SongChenYu} (an approximation that achieves a first-order convergence--see \cite{ESAIM:GerdDaniel}).
\par In what follows, we consider two-dimensional $L^1/L^2$-regularized Poisson optimal control problems. The problem is posed on $\Omega = (0,1)^2$. Following \cite[Section 5.1]{NLAA:PearsonPorcStoll}, we set constant control bounds $\mathrm{u}_a = -2$, $\mathrm{u}_b = 1.5$, and the desired state as $\bar{\mathrm{y}} = \sin(\pi x_1)\sin(\pi x_2)$. In order to ensure that both solvers exhibit a consistent behaviour over PDE-constrained optimization problems of arbitrary size, we enforce stopping conditions with \emph{absolute} tolerance $\texttt{tol} = 10^{-4}$. This is important in this case, since IP-PMM solves a smooth reformulation of the problem, and its relative tolerance criteria differ significantly from those of the active-set method when the discretized PDE size increases. In Table \ref{Table Poisson optimal control: varying grid-size and L1 regularization}, we fix the $L^2$ regularization parameter to the value $\alpha_2 = 10^{-2}$, and present the runs of the two methods (i.e. active-set, IP-PMM) for varying $L^1$ regularization (i.e. $\alpha_1$) as well as grid size. We report the size of the resulting discretized problems (before any reformulation), the value of $\alpha_1$, the number iterations required by each solver, as well as the total time to convergence. 
\begin{table}[!ht]
\caption{Poisson control: varying grid size and $L^1$ regularization ($\alpha_2 = 10^{-2}$).\label{Table Poisson optimal control: varying grid-size and L1 regularization}}
\centering
\scalebox{1}{\begin{threeparttable}
\begin{tabular}{llllll}     
\specialrule{.2em}{.05em}{.05em} 
    \multirow{2}{*}{\textbf{$\bm{n}$}} & \multirow{2}{*}{$\bm{\alpha_1}$} & \multicolumn{2}{c}{\textbf{Iterations}}     &  \multicolumn{2}{c}{\textbf{Time (s)}}  \\   \cmidrule(l{2pt}r{2pt}){3-4} \cmidrule(l{2pt}r{2pt}){5-6}
&   &  {{PMM(SSN)[Fact.]\{Krylov\}}} & {{IP-PMM(Krylov)}}   & AS & IP-PMM     \\ \specialrule{.1em}{.3em}{.3em} 
 \multirow{3}{*}{$8.45\cdot 10^3$} & $10^{-2}$ &  14(39)[28]\{1691\} & 13(1096)   &  \textbf{3.14} & 4.36  \\
  &   $10^{-4}$ &14(39)[23]\{1687\} &14(1253) &     \textbf{2.67} & 4.50 \\
  &   $10^{-6}$ &  14(39)[23]\{1686\} & 14(1256) &  \textbf{2.49} & 4.11 \\  \specialrule{.00002em}{.1em}{.1em}
 \multirow{3}{*}{$3.32\cdot 10^4$} & $10^{-2}$ & 16(34)[28]\{1295\} & 15(1147) &   \textbf{9.98}  & 18.97  \\
  &   $10^{-4}$ & 16(34)[24]\{1291\} & 15(1379)  &  \textbf{9.98} & 22.55  \\
  &   $10^{-6}$ & 16(34)[24]\{1291\} & 15(1381)  &  \textbf{9.82} & 23.40  \\ \specialrule{.00002em}{.1em}{.1em}
\multirow{4}{*}{$1.32\cdot 10^5$} & $10^{-2}$ & 18(31)[29]\{1149\} & 14(876)   &  \textbf{43.98} & 67.10 \\
  &   $10^{-4}$ & 18(31)[26]\{1143\} & 15(1111) &  \textbf{42.56} & 83.88\\
  &   $10^{-6}$ & 18(31)[26]\{1143\} & 15(1116)  &  \textbf{42.69} & 82.23  \\ \specialrule{.00002em}{.1em}{.1em}
    \multirow{3}{*}{$5.26\cdot 10^5$} & $10^{-2}$ & 18(32)[30]\{1251\} & 15(1166)  & \textbf{246.08} & 450.25  \\
  &   $10^{-4}$ & 18(32)[30]\{1250\} & 15(1171)   &  \textbf{218.24} & 743.47 \\
  &   $10^{-6}$ &18(32)[30]\{1250\} & 15(1174) &  \textbf{259.12}  & 728.32\\  \specialrule{.00002em}{.1em}{.1em}
    \multirow{3}{*}{$2.11\cdot 10^6$} & $10^{-2}$ & 21(38)[35]\{1535\} & 15(909)  &   \textbf{1199.34} & 2676.53 \\
  &   $10^{-4}$ & 21(38)[36]\{1534\}  & 16(1068) &   \textbf{1342.57} & 1878.91 \\
  &   $10^{-6}$ &  21(38)[36]\{1534\} & 16(1068) &   \textbf{1452.42} & 1612.56 \\
\specialrule{.2em}{.05em}{.05em} 
\end{tabular}
\begin{tablenotes}
\item[1] $\ddagger$ indicates that the solver reached the maximum number of iterations.
\item[2] $\dagger$ indicates that the solver ran out of memory.
\end{tablenotes}
\end{threeparttable}}
\end{table}
\par From Table \ref{Table Poisson optimal control: varying grid-size and L1 regularization} we observe that both the active-set and IP-PMM are competitive, with the active-set method achieving faster convergence in terms of CPU time. Let us note that the number of Krylov iterations required to obtain an absolute 4-digit solution by both solvers is comparable. Thus, the active-set approach is more efficient, since it requires less memory (as it solves smaller and better conditioned linear systems, and thus the inversion of the associated preconditioners is less resource-consuming). On the other hand, we can observe that both AS and IP-PMM have a very consistent behaviour for a wide range of values of the associated problem parameters. 
\par Next, we fix $\alpha_1 = 10^{-4}$ and the problem size, and run three solvers on PDE-constrained optimization instances with varying $L^2$ regularization. The results are collected in Table \ref{Table Poisson optimal control: varying tolerance and L2 regularization}, and showcase the robustness of the proposed active-set solver with respect to the problem parameters. Indeed, one can observe that the active-set scheme exhibits more consistent convergence behaviour compared to IP-PMM as we alter the problem parameters.
\begin{table}[!ht]

\centering
\scalebox{1}{\begin{threeparttable}\caption{Poisson control: varying $L^2$ regularization $(n,\alpha_1) = (1.32\cdot 10^5,10^{-4})$.\label{Table Poisson optimal control: varying tolerance and L2 regularization}}
\begin{tabular}{lllll}     
\specialrule{.2em}{.05em}{.05em} 
     \multirow{2}{*}{$\bm{\alpha_2}$} & \multicolumn{2}{c}{\textbf{Iterations}}   & \multicolumn{2}{c}{\textbf{Time (s)}}  \\   \cmidrule(l{2pt}r{2pt}){2-3} \cmidrule(l{2pt}r{2pt}){4-5}
  &  {{PMM(SSN)[Fact.]\{Krylov\}}} & {{IP-PMM(Krylov)}}   & AS & IP-PMM      \\ \specialrule{.1em}{.3em}{.3em}
       $10^{-1}$ & 18(31)[26]\{1155\} & 18(3979) &     \textbf{44.06} & 423.02  \\
     $10^{-4}$ & 18(31)[26]\{1142\} & 15(661) &      \textbf{77.29} & 106.84  \\
     $10^{-6}$ & 18(31)[26]\{1141\} & 15(607)    & \textbf{60.52} & 103.41  \\ 
      $0$ & 18(31)[26]\{1144\} & 15(499)   &\textbf{70.45} & 73.72 \\   
\specialrule{.2em}{.05em}{.05em} 
\end{tabular}\begin{tablenotes}
\item[1] $\ddagger$ indicates that the solver reached the maximum number of iterations.
\end{tablenotes}
\end{threeparttable}}
\end{table}

\subsection{Penalized quantile regression}
\par Let us now consider linear regression models of the following form
\[y_i = \beta_0 + \xi_i^\top \beta + \epsilon_i, \qquad i \in \{1,\ldots,l\}\]
\noindent where $\xi_i$ is a $d$-dimensional vector of covariates, $(\beta_0,\beta)$ are the regression coefficients and $\epsilon_i$ is some random error. A very popular problem in statistics is the estimation of the optimal coefficients, in the sense of minimizing a model of the following form:
\begin{equation} \label{eqn: linear regression model}
    \min_{(\beta_0,\beta)\ \in\ \mathbb{R} \times \mathbb{R}^d} \left\{\frac{1}{l} \sum_{i=1}^l \ell\left(y_i - \beta_0 - \xi_i^\top \beta\right) + \lambda p(\beta) \right\},
\end{equation}
\noindent where $\ell(\cdot)$ is some loss function and $p(\cdot)$ is a penalty function with an associated regularization parameter $\lambda \geq 0$. Following \cite{JRSS:Zou}, we consider the elastic-net penalty, 
\[p(\beta) \equiv p_{\tau}(\beta) \triangleq \tau \|\beta\|_1 + \frac{1-\tau}{2}\|\beta\|_2^2,\qquad 0\leq \tau \leq 1.\]
\noindent For the loss function, we employ the quantile loss
\begin{equation} \label{eqn: quantile loss}
\ell(w) \equiv \rho_{\alpha}(w) \triangleq \left(1-\alpha\right) w_- + \alpha w_+ = \frac{1}{2}\left(\lvert w\rvert + (2\alpha-1)w\right), \qquad 0 < \alpha < 1,
\end{equation}
\noindent where $w \in \mathbb{R}$. Notice that the case $\alpha = \frac{1}{2}$ yields the absolute loss. Letting $x = \begin{bmatrix} \beta_0 & \beta^\top\end{bmatrix}^\top$, and using Remark \ref{remark: l1 norm reformulation}, we can re-write problem \eqref{eqn: linear regression model} in the form of \eqref{primal problem}, as
\begin{equation*}
\begin{split}
\min_{x\ \in\ \mathbb{R}^{1+d}}&\ \left\{(\alpha-1)\mathds{1}_l^\top \left(Cx+d\right) + \frac{1}{2}x^\top Q x + \sum_{i=1}^l ((Cx+d)_i)_+  + \|Dx\|_1\right\},
\end{split}
\end{equation*}
\noindent where
\[C_{:,i} = -\frac{1}{l}[1\ \ \xi_i^\top],\ d_i = \frac{1}{l}y_i,\quad i = 1,\ldots,l,\qquad Q = \begin{bmatrix}
 0 & 0_{1,d}\\ 0_{d,1} & \lambda (1-\tau) I_d
\end{bmatrix},\qquad D = \begin{bmatrix}
 0 & 0_{1,d}\\ 0_{d,1} & \lambda \tau I_d
\end{bmatrix}.\]
\paragraph{Real-world datasets:} In what follows, we solve several instances of problem  \eqref{eqn: linear regression model}. We showcase the effectiveness of the proposed approach on 5 regression problems taken from the LIBSVM library (see \cite{CC01a}). Additional information on the datasets is collected in Table \ref{Table: quantile regression datasets}. 
\begin{table}[!ht] 
\centering
\caption{\centering Quantile regression datasets.\label{Table: quantile regression datasets}}
\scalebox{1}{
\begin{tabular}{lll}     
\specialrule{.2em}{.05em}{.05em} 
    \textbf{Name} & \textbf{\# of data points} & \textbf{\# of features} \\    \specialrule{.1em}{.3em}{.3em} 
space\_ga & 3107 & 6\\ 
abalone & 4177 & 8 \\ 
cpusmall & 8192 & 12 \\ 
cadata & 20,640 & 8 \\ 
E2006-tfidf & 16,087 &  150,360 \\ 
\specialrule{.2em}{.05em}{.05em} 
\end{tabular}}
\end{table}
\par We fix $\texttt{tol} = 10^{-4}$, $\lambda = 10^{-2}$, and $\tau = 0.5$, and run the two methods (active-set (AS), IP-PMM) on the 5 instances for varying quantile level $\alpha$. The results are collected in Table \ref{Table quantile regression: varying confidence level}.
\begin{table}[!ht]
\centering
\begin{threeparttable}\caption{\centering Quantile regression: varying confidence level (\texttt{tol} = $10^{-4}$, $\lambda = 10^{-2}$, $\tau = 0.5$).\label{Table quantile regression: varying confidence level}}
\begin{tabular}{llllll}     
\specialrule{.2em}{.05em}{.05em} 
    \multirow{2}{*}{\textbf{Problem}} & \multirow{2}{*}{$\bm{\alpha}$} & \multicolumn{2}{c}{\textbf{Iterations}}     & \multicolumn{2}{c}{\textbf{Time (s)}}  \\   \cmidrule(l{2pt}r{2pt}){3-4} \cmidrule(l{2pt}r{2pt}){5-6}
&   &  {{PMM}(SSN)[Fact.]\{Krylov\}} & {{IP--PMM}(Krylov)} &  {{AS}} & {{IP--PMM}}      \\ \specialrule{.1em}{.3em}{.3em} 
      \multirow{4}{*}{{space\_ga}} & $0.50$ &  32(36)[0]\{623\} &  18(--)  & \textbf{0.15} &  0.31  \\ 
  & $0.65$ &  32(37)[0]\{636\} &   20(--) & \textbf{0.13} & 0.38 \\ 
 & $0.80$ &  30(34)[0]\{583\} &  16(--) & \textbf{0.12} &  0.25  \\
   & $0.90$ &  29(31)[0]\{536\} &   22(--) & \textbf{0.11} &  0.35\\ \specialrule{.00002em}{.1em}{.1em}
 \multirow{4}{*}{{abalone}} & $0.50$ &  27(35)[0]\{603\} & 34(--) & \textbf{0.17} & 1.39 \\ 
  & $0.65$ &  27(31)[0]\{534\} &  31(--) & \textbf{0.15} & 1.13  \\ 
 & $0.80$ &  25(30)[0]\{492\} &   22(--)  & \textbf{0.12} & 0.85  \\
   & $0.90$ &  24(24)[0]\{367\} & 20(--) & \textbf{0.09} &   0.74  \\ \specialrule{.00002em}{.1em}{.1em}
    \multirow{4}{*}{{cpusmall}} & $0.50$ &  28(32)[0]\{538\} & 25(--)  & \textbf{0.26} &   2.02 \\ 
  & $0.65$ &  28(33)[0]\{577\} & 25(--) &  \textbf{0.29} &  2.01  \\ 
 & $0.80$ &  29(30)[0]\{518\} &  22(--) &  \textbf{0.24} & 1.81  \\
   & $0.90$ &  29(31)[0]\{487\} &  25(--) & \textbf{0.23} &   2.13 \\ \specialrule{.00002em}{.1em}{.1em}
 \multirow{4}{*}{{cadata}} & $0.50$ &  4(46)[0]\{860\} & 49(--)  & \textbf{0.27} &   11.40 \\ 
  & $0.65$ &  5(49)[0]\{933\} &   42(--)   & \textbf{0.31} &  10.51    \\ 
 & $0.80$ &  5(50)[0]\{861\} &   45(--)  & \textbf{0.30} &   10.60   \\
   & $0.90$ &  29(1405)[214]\{5722\} &  $\doublebarwedge$\tnote{1} & \textbf{8.95} &   $\doublebarwedge$  \\ \specialrule{.00002em}{.1em}{.1em}
\multirow{4}{*}{{E2006-tfidf}} & $0.50$ &  37(37)[0]\{363\} &   $\dagger$\tnote{2}  & \textbf{60.91} &  $\dagger$  \\ 
  & $0.65$ &  36(39)[0]\{405\}   & $\dagger$ & \textbf{111.66} & $\dagger$  \\ 
 & $0.80$ &  36(39)[0]\{399\}  & $\dagger$ & \textbf{118.28} &  $\dagger$   \\
   & $0.95$ &  35(37)[0]\{388\}  & $\dagger$ & \textbf{103.19} &  $\dagger$  \\ 
\specialrule{.2em}{.05em}{.05em} 
\end{tabular}
\begin{tablenotes}
\item[1] $\doublebarwedge$ indicates that the solver incorrectly identified the problem as infeasible.
\item[2] $\dagger$ indicates that the solver ran out of memory.
\end{tablenotes}
\end{threeparttable}
\end{table}
\par From Table \ref{Table quantile regression: varying confidence level} we observe that the two approaches are comparable for the smaller instances (space\_ga, abalone, and cpusmall). The active-set scheme significantly outperforms IP-PMM on the cadata problem, mainly due to its superior numerical stability and better conditioning of its linear systems. Indeed, this problem is highly ill-conditioned; this is reflected in the increased number of IP-PMM iterations needed to obtain a 4-digit accurate solution. The better conditioning of the linear systems associated with the active-set (compared to those appearing within IP-PMM) can also be observed from the fact that it did not utilize the preconditioner in most instances, enabling it to successfully converge matrix-free. Finally, we observe that for the large-scale instance (E2006-tfidf), both the exact and the inexact variants of IP-PMM crashed due to memory requirements.
\par Next, we fix $\texttt{tol} = 10^{-4},$ and $\alpha = 0.8$, and run the two methods for varying regularization parameters $\tau$ and $\lambda$. The results are collected in Table \ref{Table quantile regression: varying regularization}. We observe that both methods are robust for a wide range of parameter values. As before, the active-set scheme consistently solved the large-scale instance (E2006-tfidf) for a wide range of parameter values, without running into any memory issues.

\begin{table}[!ht]
\centering
\scalebox{0.94}{\begin{threeparttable}\caption{\centering Quantile regression: varying regularization (\texttt{tol} = $10^{-4}$, $\alpha = 0.8$).\label{Table quantile regression: varying regularization}}
\begin{tabular}{llllll}     
\specialrule{.2em}{.05em}{.05em} 
    \multirow{2}{*}{\textbf{Problem}} & \multirow{2}{*}{$(\bm{\tau},\bm{\lambda})$} & \multicolumn{2}{c}{\textbf{Iterations}}     & \multicolumn{2}{c}{\textbf{Time (s)}}  \\   \cmidrule(l{2pt}r{2pt}){3-4} \cmidrule(l{2pt}r{2pt}){5-6}
&   &  {{PMM}(SSN)[Fact.]\{Krylov\}} & {{IP--PMM}(Krylov)} & {{AS}} & {{IP--PMM}}     \\ \specialrule{.1em}{.3em}{.3em} 
      \multirow{4}{*}{{space\_ga}}& $(0.2,5\cdot 10^{-2})$ &  30(36)[0]\{585\} & 26(--)   & \textbf{0.13} &  0.40 \\
      &$(0.4,1\cdot 10^{-2})$ &  30(33)[0]\{568\} & 20(--)  & \textbf{0.12} &  0.32 \\
       &$(0.6,5\cdot 10^{-3})$ &  30(33)[0]\{654\} &  18(--)  & \textbf{0.14} & 0.29  \\
        &$(0.8,1\cdot 10^{-3})$ &  30(32)[0]\{695\} & 22(--)  & \textbf{0.12} &   0.37\\ \specialrule{.00002em}{.1em}{.1em}
 \multirow{4}{*}{{abalone}} & $(0.2,5\cdot 10^{-2})$ &  26(31)[0]\{422\} &  20(--)   & \textbf{0.12} &  0.76  \\
      &$(0.4,1\cdot 10^{-2})$ &  25(30)[0]\{485\} &    27(--) & \textbf{0.14} & 0.90 \\
       &$(0.6,5\cdot 10^{-3})$ &  26(26)[0]\{474\} &   34(--) & \textbf{0.12} &   1.27 \\
        &$(0.8,1\cdot 10^{-3})$ &   26(26)[0]\{622\} &   25(--) & \textbf{0.13} &  0.74  \\ \specialrule{.00002em}{.1em}{.1em} 
 \multirow{4}{*}{{cpusmall}} & $(0.2,5\cdot 10^{-2})$ &  28(35)[0]\{440\} & 19(--)   & \textbf{0.26} & 1.58  \\
      &$(0.4,1\cdot 10^{-2})$ &  29(29)[0]\{487\} & 17(--)  & \textbf{0.24} & 1.37    \\
       &$(0.6,5\cdot 10^{-3})$ &  29(30)[0]\{652\} & 17(--)   & \textbf{0.28} &  1.47  \\
        &$(0.8,1\cdot 10^{-3})$ &  29(29)[0]\{871\} &  19(--) & \textbf{0.30} &   1.54 \\ \specialrule{.00002em}{.1em}{.1em} 
 \multirow{4}{*}{{cadata}} & $(0.2,5\cdot 10^{-2})$ &  34(940)[265]\{4436\} &  50(--) &   \textbf{5.93} &  11.34    \\
      &$(0.4,1\cdot 10^{-2})$ &  6(56)[0]\{928\} &  50(--)  & \textbf{0.34} & 11.55  \\
       &$(0.6,5\cdot 10^{-3})$ &  6(69)[0]\{1274\} & 45(--)   & \textbf{0.40} &   10.18 \\
        &$(0.8,1\cdot 10^{-3})$ &  12(187)[0]\{3621\} &  33(--)  & \textbf{1.05} &  7.34  \\ \specialrule{.00002em}{.1em}{.1em}
\multirow{4}{*}{{E2006-tfidf}} & $(0.2,5\cdot 10^{-2})$ &  34(43)[0]\{406\} & $\dagger$\tnote{1} &  \textbf{61.64} &  $\dagger$ \\ 
  & $(0.4,1\cdot 10^{-2})$ &  36(39)[0]\{395\} & $\dagger$ &   \textbf{57.64} & $\dagger$  \\ 
 & $(0.6,5\cdot 10^{-3})$ &  36(40)[0]\{430\} & $\dagger$ & \textbf{60.10} &  $\dagger$  \\
   & $(0.8,1\cdot 10^{-3})$ &  35(36)[0]\{447\} & $\dagger$  & \textbf{84.41} &  $\dagger$ \\
\specialrule{.2em}{.05em}{.05em} 
\end{tabular}
\begin{tablenotes}
\item[1] $\dagger$ indicates that the solver ran out of memory.
\end{tablenotes}
\end{threeparttable}}
\end{table}

\subsection{Binary classification via linear support vector machines}
\par Finally, we are interested in training a binary linear classifier using regularized soft-margin linear support vector machines (SVMs), \cite{Springer:Vapnik}. More specifically, given a training dataset  $\{(y_i,\xi_i)\}_{i=1}^l$, where $y_i \in \{-1,1\}$ are the \emph{labels} and $\xi_i \in \mathbb{R}^d$ are the \emph{feature vectors} (with $d$ the number of features), we would like to solve the following optimization problem
\begin{equation} \label{eqn: linear SVM}
\min_{(\beta_0,\beta) \in \mathbb{R}\times\mathbb{R}^d} \left\{\frac{1}{l}\sum_{i=1}^l \left( 1- y_i\left(\xi_i^\top \beta -\beta_0\right)\right)_+  + \lambda \left(\tau_1 \|\beta\|_1 + \frac{\tau_2}{2}\|\beta\|_2^2\right)\right\},
\end{equation}
\noindent where $\lambda > 0$ is a regularization parameter, and $\tau_1,\ \tau_2 > 0$ are the weights for the $\ell_1$ and $\ell_2$ regularizers, respectively. The standard Euclidean-norm regularization is traditionally used as a trade-off between the margin of the classifier (the larger the better) and the correct classification of $\xi_i$, for all $i \in \{1,\ldots,l\}$ (e.g., see \cite{MachLear:CortesVapnik}). However, this often leads to a dense estimate for $\beta$, which has led researchers in the machine learning community into considering the $\ell_1$ regularizer instead, in order to encourage sparsity (e.g., see \cite{ICML:BradManga}). It is well-known that both regularizers can be combined to deliver the benefits of each of them, using the elastic-net penalty (see, for example, \cite{StatSin:Wang_etal}), assuming that $\tau_1$ and $\tau_2$ are appropriately tuned.

\paragraph{Real-world datasets:} In what follows we consider elastic-net SVM instances of the form of \eqref{eqn: linear SVM}. We showcase the effectiveness of the proposed active-set scheme on 3 large-scale binary classification datasets taken from the LIBSVM library (\cite{CC01a}). The problem names, as well as the number of features and training points are collected in Table \ref{Table: Binary classification datasets}.
\begin{table}[!ht] 
\caption{\centering Binary classification datasets.\label{Table: Binary classification datasets}}
\centering
\scalebox{1}{
\begin{tabular}{lll}     
\specialrule{.2em}{.05em}{.05em} 
    \textbf{Name} & \textbf{\# of training points} & \textbf{\# of features} \\  \specialrule{.1em}{.3em}{.3em} 
rcv1 & 20,242 & 47,236 \\ 
real-sim & 72,309 & 20,958\\
news20 & 19,996 &  1,355,191\\
\specialrule{.2em}{.05em}{.05em} 
\end{tabular}}
\end{table}
\par In the experiments to follow, we only consider large-scale problems that neither the exact nor the inexact version of IP-PMM can solve on a personal computer, due to excessive memory requirements. Nonetheless, we should note that, by following the developments in \cite{SIREV:DeSimone_etal}, IP-PMM can be specialized to problems of this form in order to be able to tackle such large-scale instances. However, the same applies to the proposed active-set method, and thus specialization is avoided for the sake of generality. We fix $\texttt{tol} = 10^{-5}$, $\lambda = 10^{-2}$, and run the active-set solver for all datasets given in Table \ref{Table: Binary classification datasets} for varying values of the regularization parameters $\tau_1$, and $\tau_2$. The results are collected in Table \ref{Binary classification via elastic-SVMs: varying regularization }.
\begin{table}[!ht]
\caption{\centering Elastic-net SVMs: varying regularization (\texttt{tol} = $10^{-5}$, $\lambda = 10^{-2}$).\label{Binary classification via elastic-SVMs: varying regularization }}
\centering
\scalebox{1}{\begin{threeparttable}
\begin{tabular}{lllll}     
\specialrule{.2em}{.05em}{.05em} 
    \multirow{2}{*}{\textbf{Problem}} & \multirow{2}{*}{$\bm{\tau_1}$} & \multirow{2}{*}{$\bm{\tau_2}$} & \multicolumn{1}{c}{\textbf{Iterations}}     & \multicolumn{1}{c}{\textbf{Time (s)}}  \\   \cmidrule(l{2pt}r{2pt}){4-4} \cmidrule(l{2pt}r{2pt}){5-5}
&  &  &  {{PMM}(SSN)[Fact.]\{Krylov\}}    &  {{AS}}    \\ \specialrule{.1em}{.3em}{.3em} 
     \multirow{4}{*}{{rcv1}}&  0.2 & 0.2 &  46(64)[0]\{1497\} & 9.28\\
      & 0.8 & 0.2 &  42(50)[0]\{642\} & 6.90 \\
          &  0.2 & 0.8 &  44(68)[0]\{1228\} & 8.98 \\
      & 5 & 5 &  30(20)[0]\{138\} & 3.74 \\ \specialrule{.00002em}{.1em}{.1em}
              \multirow{4}{*}{{real-sim}}&  0.2 & 0.2 &  57(88)[0]\{2382\} & 34.99 \\
      &  0.8 & 0.2 &  38(22)[0]\{180\} & 10.42 \\
     &  0.2 & 0.8  &  55(84)[0]\{1740\} & 31.21\\
      & 5 & 5 &  37(26)[0]\{168\} & 10.66 \\  \specialrule{.00002em}{.1em}{.1em}
               \multirow{4}{*}{{news20}} &  0.2 & 0.2 &  36(43)[0]\{629\} & 70.22 \\
      &  0.8 & 0.2 &  29(26)[0]\{206\} & 40.29 \\
         &  0.2 & 0.8  &  31(110)[31]\{857\} & 142.18 \\
      & 5 & 5 &  33(26)[0]\{154\} & 39.94 \\
\specialrule{.2em}{.05em}{.05em} 
\end{tabular}
\end{threeparttable}}
\end{table}

\par We observe that the solver was consistently able to solve these instances, without running into memory issues. In this case, the behaviour of the active-set solver was affected by the parameters $\tau_1$ and $\tau_2$ (indeed, see the number of SSN iterations for different regularization values), however, we consistently obtain convergence in a very reasonable amount of time. We observe that no preconditioning was required for most of these problems, except from one instance originating from the news20 dataset (with $\tau_1 = 0.2$ and $\tau_2 = 0.8$). On the one hand, even when the preconditioner was required, the method did not run into any memory issues. On the other hand, we observe that the associated linear systems of the proposed method are relatively well-conditioned, allowing us to avoid the inversion of the preconditioner is many instances, which enables its matrix-free utilization in several settings. 
\par Overall, we observe that the proposed algorithm is very general and can be applied in a plethora of very important applications arising in practice. We were able to showcase that the active-set nature of the method allows one to solve large-scale instances on a personal computer. The proposed method strikes a good balance between first-order methods, which are fast but unreliable, and second-order interior point methods, which are extremely robust but can struggle with the problem size, ill-conditioning, and memory requirements. We have demonstrated that convex quadratic problems with piecewise-linear terms can be solved very efficiently using the proposed active-set scheme, and we conjecture that the algorithm can be readily extended to deal with significantly more general problems, either by appropriately extending its associated theory, or by adjusting its versatile numerical linear algebra kernel.

\section{Conclusions} \label{sec: Conclusions}
\par We presented an active-set method for the solution of convex quadratic optimization problems with piecewise-linear terms in the objective, with applications to sparse approximations and risk minimization. After showing the convergence of the proposed algorithm under minimal assumptions, we derived and analyzed a robust numerical linear algebra kernel for the solution of its associated linear systems. We derive preconditioners suitable for Krylov subspace methods that are robust with respect to the penalty parameters of our method and can, in general, pave the way for the widespread utilization of iterative linear algebra in the context of active-set schemes. We were able to numerically demonstrate the efficiency, scalability and robustness of the proposed approach over a wide range of instances arising from risk-averse portfolio selection, $L^1$-regularized partial differential equation constrained optimization, quantile regression and linear classification via support vector machines, showcasing its competitiveness against alternative state-of-the-art methods. We have demonstrated the generality and versatility of the proposed solver and its associated linear algebra kernel, which make it a very competitive alternative to first- or second-order schemes appearing in the literature. 
\appendix
\section{Appendix}
\subsection{Derivation of the dual problem} \label{Appendix: derivation of dual}
\par Assume that notation introduced in Section \ref{subsec: derivation of the PMM}. Let $y = [y_1^\top\ y_2^\top]^\top \in \mathbb{R}^{m+l}$. The dual of \eqref{primal problem} is $ \sup_{y,z}\inf_x \left\{ \ell(x,y,z)\right\}.$  We have
\begin{equation*}
\begin{split}
\inf_{x}\left\{\ell(x,y,z)\right\} =&\ -\sup_x \left\{\left(F^\top y-z\right)^\top x -f(x) \right\} + y^\top \hat{b} - \delta_{\mathcal{K}}^*(z)  -h^*(y_2)\\
=&\ -f^*(F^\top y - z) + y^\top \hat{b} - \delta_{\mathcal{K}}^*(z)-h^*(y_2)\\
=&\ -\frac{1}{2}x^\top Q x - \delta_{\{0\}}\left(c + Qx - F^\top y + z\right) + y^\top \hat{b} - \delta_{\mathcal{K}}^*(z) -h^*(y_2),
\end{split}
\end{equation*}
\noindent where we used the definition of the convex conjugate and the definition of $f(\cdot)$, which yields $f^*(F^\top y - z) = \frac{1}{2}x^\top Q x + \delta_{\{0\}}\left(c + Qx - F^\top y + z\right).$ In turn, we observe that this is exactly the dual problem \eqref{dual problem}.

\subsection{Derivation of the augmented Lagrangian} \label{Appendix: derivation of the augmented Lagrangian}
Let $\beta > 0$ be some penalty parameter. The augmented Lagrangian corresponding to \eqref{primal problem} reads:
\begin{equation*} 
\begin{split}
\mathcal{L}_{\beta}(x; y, z) &\triangleq\ \sup_{u',v'} \bigg\{\ell(x,u',v') - \frac{1}{2\beta}\|u'-y\|^2 - \frac{1}{2\beta}\|v'-z\|^2 \bigg\} \\
&=\ f(x) - \inf_{u'} \left\{u'^\top  \left(Fx -\hat{b}\right) + h^*(u_{m+1:m+l}) + \frac{1}{2\beta}\|u'-y\|^2 \right\} \\&\quad \ - \inf_{v'} \left\{-v'^\top x + \delta^*_{\mathcal{K}}(v') + \frac{1}{2\beta}\|v'-z\|^2 \right\} \\
&=\ f(x) -y_1^\top (Ax-b) + \frac{\beta}{2}\|Ax-b\|^2 \\&\quad \ - \inf_{w'}\left\{-w'^\top(Cx+d) + \delta_{\mathcal{C}}(w') + \frac{1}{2\beta}\|w'-y_2\|^2\right\} \\&\quad \ - \inf_{v'} \left\{-v'^\top x + \delta^*_{\mathcal{K}}(v') + \frac{1}{2\beta}\|v'-z\|^2 \right\}.
\end{split}
\end{equation*}
\noindent where $\mathcal{C} \triangleq \left\{w \in \mathbb{R}^l \big\vert w_i \in [0,1],\ j = 1,\ldots,l \right\}.$ Let us now solve each of the two minimization problems appearing in the last expression. For the first expression, we use the fact that if $p_1(x) = p_2(x) + r^\top x,$ where $p_1,\ p_2$ are two proper closed convex functions, then $\textbf{prox}_{p_1}(x) = \textbf{prox}_{p_2}(x-r).$ Thus, we obtain 
\[ \arg\min_{w'}\left\{-w'^\top(Cx+d) + \delta_{\mathcal{C}}(w') + \frac{1}{2\beta}\|w'-y_2\|^2\right\} = \Pi_{\mathcal{C}}\left(y_2 + \beta(Cx+d)\right).\]
\noindent For the second minimization problem, we proceed as before to obtain that 
\[ \arg\min_{w'}\left\{-v'^\top x + \delta^*_{\mathcal{K}}(v') + \frac{1}{2\beta}\|v'-z\|^2 \right\} = \textbf{prox}_{\beta \delta_{\mathcal{K}}^*}\left(z+\beta x\right).\]
\noindent Then, by using the \emph{Moreau Identity} (see \cite{BSMF:Moreau}):
\begin{equation} \label{Moreau Identity}
\textbf{prox}_{\beta p}(u') + \beta \textbf{prox}_{\beta^{-1}p^*}(\beta^{-1}u') = u',
\end{equation}
\noindent and by performing some manipulations, we arrive at the final expression for the augmented Lagrangian given in \eqref{final augmented lagrangian of the primal}.
\subsection{Differentiability of the augmented Lagrangian} \label{Appendix: differentiability of augmented Lagrangian}
\par In order to find the gradient of the augmented Lagrangian, we need to use the gradient expression of the Moreau envelope of a given proper closed convex function $p$. Specifically, given some $\beta > 0$, the Moreau envelope of $p$ reads
\[ e_{\beta p}(x) = \inf_{w} \left\{p(w) + \frac{1}{2\beta}\|w-x\|^2\right\},\]
\noindent and it can be shown that $\nabla e_{\beta p}(x) = \frac{1}{\beta}\left(x - \textbf{prox}_{\beta p}(x)\right).$ We proceed by computing the gradient of 
\[ \inf_{w'}\left\{-w'^\top(Cx+d) + \delta_{\mathcal{C}}(w') + \frac{1}{2\beta}\|w'-y_2\|^2\right\},\]
\noindent noting that the gradient of the second Moreau envelope (implicitly) appearing in the augmented Lagrangian can be computed similarly (using, additionally, the Moreau identity). We notice that
\begin{equation*}
\begin{split}
 &\inf_{w'}\left\{-w'^\top(Cx+d) + \delta_{\mathcal{C}}(w') + \frac{1}{2\beta}\|w'-y_2\|^2\right\} \\ &\quad =  \inf_{w'}\left\{\delta_{\mathcal{C}}(w') + \frac{1}{2\beta}\left\|w'-\left(y_2 + \beta(Cx+d)\right)\right\|^2\right\} -\frac{\beta}{2}\|Cx+d\|^2 - y_2^\top(Cx+d).
 \end{split}
 \end{equation*}
 \noindent Thus, using the expression of the gradient of the second (shifted) Moreau envelope alongside the chain rule, we obtain that
\begin{equation*}
\begin{split}&\nabla_x\left(\inf_{w'}\left\{\delta_{\mathcal{C}}(w') + \frac{1}{2\beta}\left\|w'-\left(y_2 + \beta(Cx+d)\right)\right\|^2\right\} -\frac{\beta}{2}\|Cx+d\|^2 - y_2^\top(Cx+d)\right)\\ &\quad = -C^\top \Pi_{\mathcal{C}}\left(y_2 + \beta(Cx+d)\right). 
\end{split}
 \end{equation*}
 \noindent Proceeding in a similar manner for the second Moreau envelope, we arrive at the expression for the gradient of the proximal augmented Lagrangian penalty function given in Section \ref{subsec: derivation of the PMM}.

\subsection{Termination criteria} \label{Appendix: termination criteria}
\par Let $\mathcal{C} = \left\{x \in \mathbb{R}^l \vert\ x_i \in [0,1],\ i \in \{1,\ldots,l\}\right\}$. We write the optimality conditions for \eqref{primal problem}--\eqref{dual problem} as
\begin{equation} \label{optimality conditions for P-D}
c + Qx - A^\top y_1 + C^\top y_2 + z = 0_{n},\qquad Ax = b,\qquad y_2 = \Pi_{\mathcal{C}}\left(y_2 
+ Cx + d\right),\qquad x = \Pi_{\mathcal{K}}(x + z),
\end{equation}
\noindent and the termination criteria for Algorithm \ref{primal-dual PMM algorithm} (given a tolerance $\epsilon > 0$) are set as
\begin{equation} \label{termination criteria for PD-PMM}
\max\left\{\frac{\|c + Qx - A^\top y_1 + C^\top y_2 + z\|}{1+\|c\|}, \frac{\left\|\begin{matrix} Ax - b \\ y_2 - \Pi_{\mathcal{C}}(y_2 + Cx+d) \end{matrix}\right\|}{1+\left\|\begin{matrix} b\\d\end{matrix}\right\|}, \left\|x - \Pi_{\mathcal{K}}(x + z)\right\| \right\} \leq \epsilon.
\end{equation}
\bibliography{references} 
\bibliographystyle{siamplain}

\end{document}